\newtheorem{mainthm}{Theorem}
\newtheorem{theorem}{Theorem}[section]
\newtheorem{prop}[theorem]{Proposition}
\newtheorem{lem}[theorem]{Lemma}
\theoremstyle{definition}
\newtheorem{defn}[theorem]{Definition}
\newtheorem{rem}[theorem]{Remark}
\newtheorem{ex}[theorem]{Example}
\def\R{{\mathbb{R}}}
\def\N{{\mathbb{N}}}
\def\C{{\mathbb{C}}}
\def\Q{{\mathbb{Q}}}
\newcommand{\T}{\mathbb{T}}
\renewcommand{\t}{\mathcal{T}}
\renewcommand{\b}{\mathcal{B}}
\newcommand{\U}{\widehat{\mathcal{U}}}
\renewcommand{\u}{\mathcal{U}}
\DeclareMathOperator{\tr}{Tr}
\DeclareMathOperator{\diag}{diag}
\begin{document}
\title[$K$-Stability of A$\mathbb{T}$-algebras]{$K$-Stability of A$\mathbb{T}$-algebras}
\author[Apurva Seth, Prahlad Vaidyanathan]{Apurva Seth, Prahlad Vaidyanathan}
\address{Department of Mathematics\\ Indian Institute of Science Education and Research Bhopal\\ Bhopal ByPass Road, Bhauri, Bhopal 462066\\ Madhya Pradesh. India.}
\email{apurva17@iiserb.ac.in, prahlad@iiserb.ac.in}
\date{}
\subjclass[2010]{Primary 46L85; Secondary 46L80}
\keywords{Nonstable K-theory, C*-algebras}
\maketitle
\parindent 0pt

\begin{abstract}
We describe a procedure to compute the rational nonstable K-groups of A$\T$-algebras. As an application, we show that an A$\T$-algebra is K-stable if and only if it has slow dimension growth.
\end{abstract}

\section{Introduction}
Given a unital C*-algebra $A$, let $\mathcal{U}_n(A)$ denote the group of $n\times n$ unitary matrices over $A$. This is a topological group, and its homotopy groups $\pi_k(\mathcal{U}_n(A))$ are collectively referred to as the \emph{nonstable $K$-theory} groups of $A$. The study of these groups goes back to the work of Kuiper \cite{kuiper}, who proved that $\u(\b(H))$ is contractible whenever $H$ is an infinite dimensional Hilbert space. Since then, these groups were calculated for a variety of individual C*-algebras (over many years). These ideas were eventually placed in the broader context of noncommutative topology by Thomsen \cite{thomsen}. In particular, he introduced the notion of a quasi-unitary, and was thus able to study non-unital and unital C*-algebras on the same footing. \\

If $\U_n(A)$ denotes the group of quasi-unitaries in $M_n(A)$, one is then faced with the problem of computing $\pi_k(\U_n(A))$. Unfortunately, for an arbitrary C*-algebra $A$, such computations are prohibitively difficult. In fact, these groups are largely unknown even for the algebra of complex numbers! In order to alleviate this difficulty, we studied these groups \emph{upto rationalization} in \cite{apurva_pv_af}. Here, tools from rational homotopy theory allowed us to compute $\pi_k(\U_n(A))\otimes \Q$ when $A$ is an AF-algebra. Indeed, maps between finite dimensional C*-algebras translated to maps between $\Q$-vector spaces in a simple and predictable manner. \\

In this paper, we look to understand A$\T$-algebras along similar lines. Our first result is an explicit calculation of the groups $\pi_k(\U_n(A))\otimes \Q$ when $A$ is an A$\T$-algebra (\cref{thm_at_alg_rat_hom}). This calculation is a consequence of the fact that maps between two circle algebras may be modified (upto homotopy) into maps of a particularly nice form. \\

We then use these results to obtain a characterization of A$\T$-algebras that have slow dimension growth (see \cref{defn_slow_dim_growth}). To put it this in context, observe that the groups $\pi_k(\U_n(A))$ depend, in general, on the matrix size $n$. However, for certain classes of C*-algebras, $\pi_k(\U(A))$ is naturally isomorphic to $\pi_k(\U_n(A))$ for all $n\geq 1$ and all $k\geq 0$. In particular, for such an algebra,
\[
\pi_k(\U(A)) \cong \begin{cases}
K_0(A) &: \text{ if } k \text{ is odd, and} \\
K_1(A) &: \text{ if } k \text{ is even}.
\end{cases}
\]
Thomsen \cite{thomsen} had proved that the Cuntz algebras and simple, infinite dimensional AF-algebras have this property, and he termed this phenomenon \emph{$K$-stability}. A variety of interesting C*-algebras are now known to have this property (see \cite[Remark 1.5]{apurva_pv_cx}). Similarly, we say that a C*-algebra $A$ is \emph{rationally} $K$-stable if the groups $\pi_k(\U_n(A))\otimes \Q$ are all naturally isomorphic to one another. We had proved in \cite{apurva_pv_af} that these two notions coincide for AF-algebras. Our main result here is that the same is true for all A$\T$-algebras, and that it is equivalent to slow dimension growth.

\begin{mainthm}\label{mainthm_k_stable}
For an A$\T$-algebra $A$, the following are equivalent:
\begin{enumerate}
\item $A$ is $K$-stable.
\item $A$ is rationally $K$-stable.
\item $A$ has slow dimension growth.
\end{enumerate}
\end{mainthm}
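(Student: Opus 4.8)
The plan is to prove the cycle of implications $(1)\Rightarrow(2)\Rightarrow(3)\Rightarrow(1)$. The implication $(1)\Rightarrow(2)$ should be essentially formal: if $A$ is $K$-stable, then the natural maps $\pi_k(\U_n(A))\to\pi_k(\U_{n+1}(A))$ are all isomorphisms, and tensoring the directed system with the flat $\Z$-module $\Q$ preserves these isomorphisms, so $A$ is rationally $K$-stable. The substance of the theorem lies in $(2)\Rightarrow(3)$ and $(3)\Rightarrow(1)$.

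For $(2)\Rightarrow(3)$ I would argue by contraposition, using the explicit computation of $\pi_k(\U_n(A))\otimes\Q$ provided by \cref{thm_at_alg_rat_hom}. The idea is that failure of slow dimension growth forces the dimensions of the fibres (i.e.\ the matrix sizes over $C(\T)$ appearing in the inductive limit) to grow without control relative to the ranks, and this growth must manifest as a genuine $n$-dependence in the rational nonstable homotopy groups. Concretely, I would locate a specific degree $k$ and matrix sizes $n$ at which the explicit formula from \cref{thm_at_alg_rat_hom} yields non-isomorphic $\Q$-vector spaces $\pi_k(\U_n(A))\otimes\Q$ and $\pi_k(\U_{n+1}(A))\otimes\Q$, thereby contradicting rational $K$-stability. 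Making this quantitative will require tracking how the connecting maps of the A$\T$-system act on the rational homotopy, and identifying the precise obstruction term that slow dimension growth is designed to kill.

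For $(3)\Rightarrow(1)$ I would show that slow dimension growth forces genuine (integral) $K$-stability. Since $K$-stability is preserved under inductive limits (with the appropriate continuity of nonstable $K$-functors established by Thomsen), it suffices to control the building blocks: each circle algebra $M_{[n]}(C(\T))$ is not itself $K$-stable, but slow dimension growth guarantees that the connecting maps eventually absorb the unstable part, so that the natural stabilization maps on the limit become isomorphisms. The key technical input here is the normal form for maps between circle algebras mentioned in the introduction, which lets one compute the effect of a connecting map on $\pi_k(\U_n(-))$ explicitly and show that under slow dimension growth the relevant unstable contributions vanish in the limit.

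The main obstacle I anticipate is the step $(3)\Rightarrow(1)$, and specifically upgrading a \emph{rational} statement to an \emph{integral} one. The computations in \cref{thm_at_alg_rat_hom} live after tensoring with $\Q$, so they can at best deliver rational $K$-stability directly; the genuine content is that, for A$\T$-algebras, torsion phenomena in the nonstable homotopy groups do not obstruct integral $K$-stability once slow dimension growth holds. I would address this by combining the rational calculation with independent control of the integral groups $\pi_k(\U_n(C(\T)))$ and $K_1$ of circle algebras (which are torsion-free), arguing that in the inductive limit the stabilization maps are isomorphisms on the nose rather than merely rationally. Establishing that the rational criterion is in fact sharp enough to detect integral $K$-stability is where the real work will lie.
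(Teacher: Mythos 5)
Your outline for $(1)\Rightarrow(2)$ is correct (and is exactly the paper's throwaway observation), and your plan for $(3)\Rightarrow(1)$ is essentially the paper's argument in \cref{thm_k_stable_sdg,thm_at_alg_k_stable}: fix $k$, discard finitely many terms so that $\min\dim(A_p) \geq \lceil k/2\rceil + 1$ for all $p$, and then each building block $A_p = C(\T)\otimes B_p$ is \emph{already integrally} stable in degree $k$ — via the split extension $0\to C_{\ast}(\T,B_p)\to A_p\to B_p\to 0$, the classical stability of $\pi_k(\u_n)$ in the range $k\leq 2n-1$ (Mimura--Toda), and the Five Lemma — after which continuity of $G_k$ finishes the proof. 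Note that the ``rational-to-integral upgrade'' you flag as the main obstacle does not actually arise in this implication: slow dimension growth hands you integral stability directly, because you never leave the stable range of $\pi_k(\u_n)$. (Your side remark that the groups $\pi_k(\U_n(C(\T)))$ are torsion-free is false in the unstable range, e.g.\ $\pi_4(\u_2)$ contains $\pi_4(SU(2))\cong \Z/2\Z$; the point is precisely to avoid that range, not that torsion is absent.)

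The genuine gap is in $(2)\Rightarrow(3)$, and the contrapositive strategy you propose cannot work as described. Slow dimension growth is an \emph{existential} statement over presentations: its failure means that \emph{every} inductive-limit presentation of $A$ has $\liminf_p \min\dim(A_p) < \infty$, and no single presentation witnesses this in a usable way. Indeed, the failure of slow dimension growth for one given sequence carries no information about $A$: since connecting maps need not be injective, one can take any presentation of any A$\T$-algebra and adjoin to each $A_p$ an orphan summand $C(\T)$ mapped to $0$ in $A_{p+1}$; the limit is unchanged, yet $\min\dim(A_p)=1$ for all $p$. So there is no ``specific degree $k$ and matrix size $n$'' to locate from \cref{thm_at_alg_rat_hom}; any argument must convert rational $K$-stability into the \emph{construction} of a good presentation, which is exactly what the paper does in two steps, both absent from your proposal: (a) \cref{lem_rat_k_stable_min_dim} (resting on \cref{lem_rat_k_stable_af}): pass to the associated AF-quotient by evaluation at $1\in\T$ (this is why the connecting maps must first be made diagonal), use rational $K$-stability to show that the small summands are eventually orphans, and excise them to obtain, for each fixed $m$, a presentation with $\min\dim \geq m$; and (b) the merging argument in \cref{thm_k_stable_sdg}: interleave these presentations (one for each $m$) into a single sequence $(A_{m,1},\psi_m)$ and verify — the surjectivity/injectivity argument for the induced map $\lambda$ — that its limit is isomorphic to $A$. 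Step (b) is necessary because having a $\min\dim\geq m$ presentation for every $m$ separately is strictly weaker, on its face, than having one presentation with $\min\dim\to\infty$; this gluing, together with the orphan excision, is the real content of the theorem and is where your proposal would need to be rebuilt.
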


The paper is organized as follows. In \cref{pre}, we briefly recall the preliminary results and definitions that will be used throughout the paper. In \cref{sec_hom_circle}, we describe a simple class of $\ast$-homomorphisms between circle algebras, and show that, for our purposes, understanding such maps is sufficient. This reduction in turn results in \cref{thm_at_alg_rat_hom}, which is the main goal of \cref{sec_rat_hom_at}. Finally, \cref{sec_k_stability_at} is devoted to a proof of \cref{mainthm_k_stable}.

\section{Preliminaries}\label{pre}
We begin by reviewing the work of Thomsen of constructing the nonstable K-groups associated to a C*-algebra. For the proofs of all the facts mentioned below, the reader may refer to \cite{thomsen}. \\

Let $A$ be a C*-algebra (not necessarily unital). Define an associative composition $\cdot$ on $A$ by
\[
a\cdot b=a+b-ab
\]
An element $u\in A$ is said to be a quasi-unitary if $u\cdot u^{\ast} = u^{\ast}\cdot u = 0$, and we write $\U(A)$ for the set of all quasi-unitary elements in $A$. Moreover, we write $\U_n(A)$ for the group $\U(M_n(A))$.\\


\begin{defn}
Let $A$ be a C*-algebra, and $k\geq 0$ and $m\geq 1$ be integers. Define
\[
G_k(A) :=\pi_k(\U(A)), \text{ and } F_m(A) := \pi_m(\U(A))\otimes \Q.
\]
\end{defn}

Note that if $A$ is unital and $\u(A)$ denotes the group of unitaries in $A$, then the map $\U(A) \to \u(A)$ given by $u \mapsto (1-u)$ induces a natural isomorphism of groups. In particular,
\[
G_k(A) \cong \pi_k(\u(A)) \text{ and } F_m(A) \cong \pi_m(\u(A))\otimes \Q
\]

Recall \cite{schochet} that a homology theory on the category of $C^{\ast}$-algebras is a sequence $(h_n)$ of covariant, homotopy invariant functors from the category of C*-algebras to the category of abelian groups such that, if $0 \to J\xrightarrow{\iota} B\xrightarrow{p} A\to 0$ is a short exact sequence of C*-algebras, then for each $n \in \N$, there exists a connecting map $\partial : h_n(A)\to h_{n-1}(J)$, making the following sequence exact
\[
\ldots \xrightarrow{\partial} h_n(J)\xrightarrow{h_n(\iota)} h_n(B) \xrightarrow{h_n(p)} h_n(A)\xrightarrow{\partial} h_{n-1}(J)\to \ldots
\]
and furthermore, $\partial$ is natural with respect to morphisms of short exact sequences. Finally, we say that a homology theory $(h_n)$ is continuous if, whenever $A = \lim A_i$ is an inductive limit in the category of C*-algebras, then $h_n(A) = \lim h_n(A_i)$ in the category of abelian groups. The next proposition is a consequence of \cite[Proposition 2.1]{thomsen} and \cite[Theorem 4.4]{handelman}.

\begin{prop}\label{prop: continuous_homology}
Both $(G_k)$ and $(F_m)$ are continuous homology theories.
\end{prop}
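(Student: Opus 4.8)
The plan is to verify two things for each of the two functor sequences: that they constitute homology theories in the sense recalled above, and that they are continuous. Since the proposition explicitly invokes two external results, the natural strategy is to extract exactly what each contributes and then assemble them, rather than to reprove the homotopy-theoretic machinery from scratch.

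First I would address $(G_k)$. The functoriality and homotopy invariance of $G_k(A) = \pi_k(\U(A))$ follow from the fact that $A \mapsto \U(A)$ is a homotopy functor into topological groups, so that applying $\pi_k$ yields covariant, homotopy-invariant functors to abelian groups (for $k=0$ one uses that $\U(A)$ is a group, so $\pi_0$ is a group as well). The existence of the long exact sequence with natural connecting maps $\partial : G_k(A) \to G_{k-1}(J)$ associated to a short exact sequence $0 \to J \to B \to A \to 0$ is precisely the content cited from \cite[Proposition 2.1]{thomsen}; I would quote it directly, noting that Thomsen establishes that $\U(J) \to \U(B) \to \U(A)$ behaves like a fibration up to homotopy, which is what produces the long exact sequence of homotopy groups together with the naturality of $\partial$. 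Continuity, namely $G_k(\lim A_i) = \lim G_k(A_i)$, reduces to the commutation of $\pi_k$ with the inductive limit of the topological groups $\U(A_i)$; this is a standard compactness argument, since any continuous map from a compact space $\S^k$ (or a homotopy defined on $\S^k \times [0,1]$) into the limit factors through some $\U(A_i)$, and I would cite the relevant portion of \cite{thomsen} for this.

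Next I would treat $(F_m)$, where $F_m(A) = \pi_m(\U(A)) \otimes \Q = G_m(A) \otimes \Q$. The key point is that $- \otimes \Q$ is an exact functor on the category of abelian groups, since $\Q$ is flat over $\Z$. Consequently, tensoring the long exact sequence for $(G_k)$ with $\Q$ preserves exactness and yields the long exact sequence for $(F_m)$, with connecting maps $\partial \otimes \mathrm{id}_\Q$ that remain natural; covariance and homotopy invariance are inherited immediately since they hold before tensoring. For continuity, I would use that tensoring with $\Q$ commutes with direct (inductive) limits of abelian groups, so $F_m(\lim A_i) = G_m(\lim A_i) \otimes \Q = (\lim G_m(A_i)) \otimes \Q = \lim (G_m(A_i) \otimes \Q) = \lim F_m(A_i)$. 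The role of \cite[Theorem 4.4]{handelman} here is presumably to supply the identification or the structural input making the rationalized sequence genuinely a homology theory (for instance, ensuring the groups are abelian and the naturality is correctly formulated in the rationalized setting); I would cite it at the precise place it is needed.

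I expect the main obstacle to be bookkeeping rather than conceptual difficulty: namely, pinning down exactly which hypotheses on the short exact sequence are required for Thomsen's long exact sequence to apply (whether every short exact sequence of C*-algebras suffices, or only those admitting suitable splittings or completely positive sections), and confirming that the naturality of $\partial$ with respect to morphisms of short exact sequences is genuinely furnished by the cited results and not merely for a single sequence. Once the long exact sequence and its naturality are in hand for $(G_k)$, the passage to $(F_m)$ via flatness of $\Q$ is routine, so the entire weight of the argument rests on correctly invoking \cite[Proposition 2.1]{thomsen} and \cite[Theorem 4.4]{handelman} and on the elementary compactness argument for continuity.
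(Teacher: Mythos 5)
Your overall skeleton matches the intended argument: the paper offers no proof beyond the citation sentence, and the intended division of labor is that \cite[Proposition 2.1]{thomsen} supplies the homology-theory structure for $(G_k)$ (the natural long exact sequence attached to any short exact sequence of C*-algebras, via the fibration-like behaviour of $\U(B)\to\U(A)$), while \cite[Theorem 4.4]{handelman} supplies the \emph{continuity} of $(G_k)$. Your treatment of $(F_m)$ --- exactness of $-\otimes\Q$ over $\Z$ for the long exact sequence, and commutation of $-\otimes\Q$ with direct limits for continuity --- is correct and is indeed the routine half of the proposition; no further external input is needed there.

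The genuine gap is in your continuity argument for $(G_k)$, and it is tied to your misreading of the two citations. An inductive limit $A=\lim A_i$ of C*-algebras is \emph{not} a colimit of topological spaces: the union of the images of the $A_i$ is only \emph{dense} in $A$, because the limit involves a completion. Consequently the union of the images of the groups $\U(A_i)$ is only dense in $\U(A)$, and a continuous map from the compact space $\S^k$ into $\U(A)$ need \emph{not} factor through any $\U(A_i)$; the ``standard compactness argument'' you invoke fails at exactly this point (it is valid for colimit topologies, e.g.\ unions of CW complexes, but not here). The correct argument is an approximation-and-perturbation one: an element of $M_n(A)$ sufficiently close to a quasi-unitary can be corrected (by functional calculus) to a quasi-unitary, sufficiently close quasi-unitaries are homotopic, and these facts let one push a map $\S^k\to\U(A)$, up to homotopy, into the image of some $\U(A_i)$, and similarly for homotopies (injectivity of the limit comparison map). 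This nontrivial step is precisely what \cite[Theorem 4.4]{handelman} provides; it is not, as you guessed, an input for the rationalized theory, which inherits everything formally from $(G_k)$. As written, your proof establishes the homology-theory axioms but leaves continuity of $(G_k)$ --- the only genuinely delicate point of the proposition --- unproved.
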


Let us now describe the behaviour of the functor $F_m$ on finite dimensional C*-algebras. From now on, we write $M_n$ for the algebra $M_n(\C)$, and $\N_0$ for the set of all non-negative integers. Now suppose $A = \bigoplus_{j=1}^K M_{n_j}$ and $B = \bigoplus_{i=1}^L M_{\ell_i}$ are two finite dimensional C*-algebras, and $\varphi:A\to B$ is a $\ast$-homomorphism. Then, $\varphi$ is determined upto unitary equivalence by an $L\times K$ matrix $\Phi \in M_{L\times K}(\N_0)$, called the multiplicity matrix of $\varphi$ (see, for instance, \cite[Section III.2]{davidson}). One of the main results of \cite{apurva_pv_af} was the following: For each $m\geq 1$,
\[
F_m(A) = \bigoplus_{j=1}^K \Q^{d(m,j)} \text{ where } d(m,j) = \begin{cases}
1 &: \text{ if } 1\leq m\leq 2n_j-1, m\text{ odd}, \\
0 &: \text{ otherwise}
\end{cases}
\]
and $F_m(B)$ has an analogous expression. Moreover, $F_m(\varphi):F_m(A)\to F_m(B)$ is represented as multiplication by the matrix $\Phi$. We now use this as the starting point to study A$\T$-algebras.
%
%

\section{$\ast$-Homomorphisms between Circle Algebras}\label{sec_hom_circle}

Let $\T := \{z \in \C : |z| = 1\}$ denote the unit circle. A circle algebra is an algebra of the form $C(\T)\otimes F$ where $F$ is a finite dimensional C*-algebra. In this section, we will describe a particularly tractable class of $\ast$-homomorphisms between two circle algebras, and prove that they are generic in a certain sense. We begin by revisiting a result of Thomsen \cite{thomsen_circle}, where he classified such maps upto approximate unitary equivalence.  \\

Let $\t^n$ denote the set of unordered $n$-tuples in $\T$ (with $\t^0=\emptyset$).  If $\Sigma_n$ denotes the symmetric group on $n$ letters, we may think of $\t^n$ as a closed subset of $\C^n/\Sigma_n$. Let
\[
A = \bigoplus_{j=1}^K C(\T)\otimes M_{n_j}, \text{ and } B = \bigoplus_{i=1}^L C(\T)\otimes M_{\ell_i}.
\]
Let $\varphi: A\to B$ be a $\ast$-homomorphism and let $u$ denote the canonical unitary generator of $C(\T)$. Fix $1\leq j\leq K$, and choose minimal projections $e_j\in M_{n_j}$. Set $u_j=(0,0,\ldots,0,u\otimes e_j,0,\ldots,0)\in A$ and write 
\[
\varphi(u_j)=(w_{1,j}, w_{2,j},\hdots, w_{L,j}) \in B
\]
For each $1\leq i\leq L$, $p_{i,j}=w_{i,j}w_{i,j}^{\ast}=w_{i,j}^{\ast}w_{i,j}$ is a projection in $C(\T)\otimes M_{\ell_i}$, so the value of $\tr(w_{ij}w_{ij}^{\ast}(z))$ does not vary with $z\in\T$. Let $a_{i,j} \in \N\cup\{0\}$ be that constant value. Define $\widehat{\varphi}_{i,j}:\T\to \t^{a_{i,j}}$ by 
\[
\widehat{\varphi}_{i,j}(z)=[\lambda\in \sigma(w_{i,j}(z)): |\lambda|=1].
\]
It is a fact that the maps $\widehat{\varphi}_{i,j}$ are all continuous. The values $\{a_{i,j}\}$ are called the \emph{multiplicity constants} of $\varphi$, and $\{\widehat{\varphi}_{i,j}\}$ are called the \emph{characteristic functions} for $\varphi$. Thomsen's main result is that the $\ast$-homomorphism $\varphi$ is uniquely determined (upto approximate unitary equivalence) by these quantities.

\begin{theorem}[\cite{thomsen_circle}, Theorem 2.1]\label{thomsen_1}
~\begin{enumerate}
\item Two $\ast$-homomorphisms $\varphi,\psi:A\to B$ are approximately unitarily equivalent if and only if $\widehat{\varphi}_{i,j}=\widehat{\psi}_{i,j}$ for all $i=1,2,\ldots, L$ and $j=1,2,\ldots, K$.
\item Let $\{a_{i,j}: 1\leq i\leq L, 1\leq j\leq K\}$ be a set of non-negative integers such that $\sum_{j=1}^{K}a_{ij}n_j\leq m_i$ for each $i \in \{1,2,\ldots, L\}$, and let $\eta_{i,j}:\T\to \t^{a_{i,j}}$ be continuous maps. Then, there exists a $\ast$-homomorphism $\varphi:A\to B$ such that $\widehat{\varphi}_{i,j}=\eta_{i,j}$ for all $i,j$.
\end{enumerate}
\end{theorem}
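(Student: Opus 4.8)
The plan is to exploit the direct-sum structure to reduce both assertions to a single pair of blocks, and then analyse the pointwise structure of such maps. A $\ast$-homomorphism $\varphi : A \to B$ decomposes as a matrix of component maps $\varphi_{i,j} : C(\T)\otimes M_{n_j} \to C(\T)\otimes M_{\ell_i}$ (obtained by cutting down by the central projections of $A$ and $B$), and both approximate unitary equivalence and the realization problem respect this decomposition. So I would first record that it suffices to work with a single map $\varphi : C(\T)\otimes M_n \to C(\T)\otimes M_\ell$ and its single characteristic function $\eta := \widehat{\varphi} : \T \to \t^a$. The basic structural fact I would establish is that, for each $y \in \T$, evaluation at $y$ turns $\varphi$ into a finite-dimensional representation of $C(\T)\otimes M_n$; since the irreducible representations of $C(\T)\otimes M_n$ are exactly the point evaluations $f \mapsto f(z)$ (each of dimension $n$), this representation is unitarily equivalent to $f \mapsto U_y\,\diag\big(f(z_1),\dots,f(z_a)\big)\,U_y^{\ast}$ padded by zeros, where the unordered list $[z_1,\dots,z_a]$ is precisely $\eta(y)$ and $an \le \ell$.

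Necessity in part (1) is then quick. If $\varphi$ and $\psi$ are approximately unitarily equivalent, say $v_k \varphi(\cdot) v_k^{\ast} \to \psi(\cdot)$ in the point-norm topology for unitaries $v_k$ in the unitisation of $B$, then conjugation by a unitary preserves spectra exactly, so $\widehat{v_k\varphi v_k^{\ast}}_{i,j} = \widehat{\varphi}_{i,j}$ for every $k$. Since the unimodular part of the spectrum varies continuously (in the Hausdorff metric on $\t^{a_{i,j}}$), the left-hand side converges to $\widehat{\psi}_{i,j}$, forcing $\widehat{\varphi}_{i,j} = \widehat{\psi}_{i,j}$.

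For the existence statement in part (2) the crux is a single realization lemma: every continuous $\eta : \T \to \t^a$ arises as the characteristic function of some $\ast$-homomorphism $C(\T) \to C(\T)\otimes M_a$; equivalently, $\eta$ can be realized as the unordered spectrum of a continuous loop $W : \T \to \u(M_a)$. I would prove this by choosing a fine partition of $\T$ into arcs on which $\eta$ lifts to an ordered tuple of continuous unimodular functions, setting $W$ equal to the corresponding diagonal unitary on each arc, and then splicing these local models together: the only obstruction to a global continuous choice is the monodromy permutation of the eigenvalues around the circle, and because $\T$ is one-dimensional this permutation can always be absorbed by conjugating with a continuous path of permutation-type unitaries. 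Granting the lemma, I would build $\varphi$ by placing, inside the $i$-th block, the direct sum over $j$ of the eigenvalue models coming from $\eta_{i,j}$ amplified by $\mathrm{id}_{M_{n_j}}$, and padding with zeros; the hypothesis $\sum_j a_{i,j} n_j \le \ell_i$ is exactly what guarantees this direct sum fits inside $C(\T)\otimes M_{\ell_i}$.

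The hard direction, and the step I expect to be the main obstacle, is sufficiency in part (1): two maps with identical characteristic functions must be approximately unitarily equivalent. Here I would again partition $\T$ into small arcs, use the common eigenvalue function to order and simultaneously diagonalize both $\varphi$ and $\psi$ on each arc, and thereby produce, arc by arc, a unitary conjugating $\varphi$ into $\psi$ up to an error governed by the modulus of continuity of the data. The difficulty is entirely global and local-to-branch-point: the locally defined conjugating unitaries need not match on overlaps, so one must patch them into a single approximate unitary over all of $\T$, and one must control the behaviour near the finitely many points where eigenvalues of $\eta$ collide and the local ordering degenerates. I expect the coalescence of eigenvalues to help rather than hurt---when eigenvalues come together the diagonalizing unitary becomes correspondingly less constrained, giving the freedom needed to reconcile neighbouring patches---but making this quantitative, and showing that the accumulated patching errors can be driven to zero by refining the partition, is where the real work lies.
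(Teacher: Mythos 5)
First, a point of scope: this theorem is Thomsen's, and the paper does not prove part (1) at all --- it only recalls Thomsen's construction for part (2), because the explicit form of the maps produced there (the ``Type A'' form) is what the rest of the paper needs. Measured against that, your treatment of part (2) is essentially the paper's argument: produce a continuous ordered lift of $\eta$ over the cut-open circle, and absorb the resulting monodromy permutation $\sigma$ by conjugating the diagonal map $f\mapsto \diag\big(f(\lambda_1(t)),\ldots,f(\lambda_a(t))\big)$ with a continuous path of unitaries from the identity to the permutation matrix $v_{\sigma}$; then amplify by $M_{n_j}$, assemble the blocks diagonally, and pad with zeros using $\sum_j a_{i,j}n_j\le \ell_i$. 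One caution: your claim that on each small arc ``$\eta$ lifts to an ordered tuple of continuous unimodular functions'' is not automatic --- eigenvalue collisions can occur on arbitrarily small arcs, and continuous selection of an ordered lift over an interval is precisely the selection theorem the paper imports from \cite[Theorem 5.2 of Chapter II]{kato}. Once you invoke it, you may as well invoke it once over the whole cut-open circle, as the paper does; the partition-and-splice step then buys nothing. Your necessity argument for part (1) is fine: the elements $w_{i,j}(z)$ are normal (indeed partial unitaries), so their unimodular spectra are preserved exactly by unitary conjugation and vary continuously under norm limits.

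The genuine gap is the sufficiency direction of part (1), which you flag yourself as the ``real work.'' Equality of characteristic functions does give, at each point $y\in\T$, a unitary equivalence of the finite-dimensional representations $\mathrm{ev}_y\circ\varphi$ and $\mathrm{ev}_y\circ\psi$, but passing from this pointwise statement to approximate unitary equivalence is not a soft patching argument: the locally chosen conjugating unitaries fail to match on overlaps precisely at eigenvalue crossings, and your heuristic that ``coalescence gives freedom'' must be converted into a quantitative interpolation lemma (a Berg-technique/Weyl--von Neumann type statement saying that a conjugating unitary can be rotated within the approximate commutant of a nearly-degenerate diagonal at small cost in norm). That lemma is the actual content of Thomsen's uniqueness theorem and occupies the bulk of \cite{thomsen_circle}; your proposal identifies the difficulty accurately but does not resolve it. This does not put you at odds with the paper --- the paper also defers part (1) entirely to Thomsen --- but as a standalone proof of the statement as given, part (1) remains open in your write-up.
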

\begin{proof}
Since the construction is crucial for our needs, we recall the proof of part (2). For clarity, we break it into cases. In what follows we will identify $C(\T)$ with $\{f \in C[0,1] : f(0) = f(1)\}$.\\

Suppose first that $A = C(\T)\otimes M_n$ and $B = C(\T)\otimes M_{\ell}$ so that $na \leq \ell$, and $\eta : \T\to \t^a$ is a continuous function. By \cite[Theorem 5.2 of Chapter II]{kato}, there are continuous functions $\lambda_1, \lambda_2, \ldots, \lambda_a : [0,1]\to \T$ such that
\[
\eta(e^{2\pi it}) = [\lambda_1(t), \lambda_2(t), \ldots, \lambda_a(t)]
\]
for all $t\in [0,1]$, and there is a permutation $\sigma \in \Sigma_a$ such that $\lambda_{\sigma(p)}(0)=\lambda_{p}(1)$ for each $p \in \{1,2,\ldots, a\}$. Let $v_{\sigma} \in \u_a$ be the permutation matrix obtained from the identity matrix by permutating its rows according to $\sigma$. Let $w:[0,1]\to \u_a$ be a continuous path with $w(0) = 1$ and $w(1) = v_{\sigma}$, and let $u : [0,1]\to \u_{\ell}$ be given by $u(t) = \diag(w(t)\otimes I_n, I_{\ell-na})$. We may then define $\varphi : C(\T)\otimes M_n\to C(\T)\otimes M_{\ell}$ by
\begin{equation}\label{generic_hom}
\varphi(f)(t)=u(t)\begin{pmatrix}
f(\lambda_1(t))& 0 &\ldots & 0 & 0\\
0 & f(\lambda_2(t))& \ldots & 0 & 0 \\
\vdots &\vdots &\vdots & \vdots & \vdots \\
0& 0 & \ldots & f(\lambda_a(t)) & 0 \\
0 & 0 & \ldots & 0 & 0_{m-na}
\end{pmatrix} u(t)^{\ast}.
\end{equation}
Then, $\varphi$ is a well-defined $\ast$-homomorphism with characteristic function $\eta$.\\

Now suppose $A = \bigoplus_{j=1}^K C(\T)\otimes M_{n_j}$ and $B = C(\T)\otimes M_{\ell}$. Then, $\{a_1, a_2, \ldots, a_K\}$ are non-negative integers such that $\sum_{j=1}^K a_jn_j \leq \ell$ and $\eta_j:\T\to \t^{a_j}$ are continuous functions. For $j \in \{1,2,\ldots, K\}$, define a $\ast$-homomorphism $\varphi_j : C(\T)\otimes M_{n_j}\to C(\T)\otimes M_{a_{1,j}n_j}$ whose characteristic function is $\eta_{j}$ using the recipe from \cref{generic_hom}. We may then define $\varphi : A\to B$ by
\[
(g_1,g_2,\hdots,g_K)\mapsto \diag(\varphi_1(g_1),\varphi_2(g_2),\hdots, \varphi_K(g_K),0,0\hdots,0).
\]
Then, $\varphi$ is a $\ast$-homomorphism with multiplicity constants $\{a_1,a_2,\ldots, a_K\}$ and characteristic functions $\{\eta_{1}, \eta_{2},\ldots, \eta_{K}\}$. \\

Finally, if $A = \bigoplus_{j=1}^K C(\T)\otimes M_{n_j}$ and $B = \bigoplus_{i=1}^L C(\T)\otimes M_{\ell_i}$, then we may define $\varphi_i : A \to C(\T)\otimes M_{\ell_i}$ using the above recipe with multiplicity constants $\{a_{i,1}, a_{i,2}, \ldots, a_{i,K}\}$ and characteristic functions $\{\eta_{i,1}, \eta_{i,2}, \ldots, \eta_{i,K}\}$. Then define $\varphi : A\to B$ by $\varphi(a) = (\varphi_1(a), \varphi_2(a), \ldots, \varphi_L(a))$. Once again, this map satisfies the required properties.
\end{proof}

The proof of this theorem allows us to make the following definition.

\begin{defn}\label{def:gen_hom}
A $\ast$-homomorphism between two circle algebras is said to be of \emph{Type A} if it is in the form described in the preceding proof. 
\end{defn}

With this terminology in place, Thomsen's theorem merely states that every $\ast$-homomorphism between circle algebras is approximately unitarily equivalent to a $\ast$-homomorphism of Type A. \\

Now consider the special case of the earlier proof with $A = C(\T)\otimes M_n$ and $B = C(\T)\otimes M_{\ell}$. Then, a $\ast$-homomorphism $\varphi : A\to B$ of Type A is described by the following data:
\begin{itemize}
\item A non-negative integer $a$ with $an\leq \ell$ (which is the multiplicity of $\varphi$).
\item A permutation $\sigma_{\varphi}\in \Sigma_a$.
\item Continuous maps $\lambda_{p} : [0,1]\to \T$ such that $\lambda_{\sigma_{\varphi}(p)}(0) = \lambda_{p}(1)$ for each $p \in \{1,2,\ldots, a\}$.
\item A continuous path $w : [0,1]\to \u_a$ with $w(0) = I_a$ and $w(1) = v_{\sigma_{\varphi}}$ (where $v_{\sigma_{\varphi}}$ is the permutation matrix associated to $\sigma_{\varphi})$.
\end{itemize}
The tuple $(a, \sigma_{\varphi}, \lambda_1, \lambda_2, \ldots, \lambda_a, w)$ will henceforth referred to as the \emph{data tuple} associated to $\varphi$. For convenience, we will write $\sigma_0$ for the identity permutation, and $w_0$ for the constant path at the identity. A $\ast$-homomorphism $\varphi : A\to B$ is said to be of 
\begin{itemize}
\item Type B if it is of Type A and $\sigma_{\varphi} = \sigma_0$.
\item Type C if it is of Type B and $w = w_0$.
\item Type D if it is of Type C and each $\lambda_{p}$ is a loop in $\T$ based at $1$. 
\end{itemize}

Now suppose $A = \bigoplus_{j=1}^K C(\T)\otimes M_{n_j}$ and $B = \bigoplus_{i=1}^L C(\T)\otimes M_{\ell_i}$. Then, a $\ast$-homomorphism $\varphi : A\to B$ of Type A can be built up from $\ast$-homomorphisms $\varphi_{i,j} : C(\T)\otimes M_{n_j} \to C(\T)\otimes M_{\ell_i}$ as in the proof above. We say that $\varphi$ is of Type B (respectively of Type C or Type D) if each $\varphi_{i,j}$ of Type B (respectively of Type C or Type D). \\

We now wish to show that every $\ast$-homomorphism of Type A is homotopic to a $\ast$-homomorphism of Type B. Before we prove this, we introduce some notations that will be used in the proof. Let $\lambda_1,\lambda_2:[0,1]\to\T$ be two continuous maps such that $\lambda_1(1)=\lambda_2(0)$. Then, for $s\in[0,1]$, define $\lambda_i^s,\lambda_{1,2}^s:[0,1]\to\T$ by
\[
\lambda_{1,2}^s(t)= \left\{
\begin{aligned}
& \lambda_1(2st + 1 -s) &: \text{ if } 0\leq t\leq \frac{1}{2} \\
& \lambda_2(2t-1) &: \text{ if } \frac{1}{2}\leq t\leq 1\\
\end{aligned}
\right\}
\text{ and }\lambda_i^s(t)=\lambda_i(t(1-s)).
\]
We also write $\lambda_{1,2} := \lambda_{1,2}^1$.

\begin{theorem}\label{thm_homotopy_type_ab}
A $\ast$-homomorphism of Type A between two circle algebras is homotopic to a $\ast$-homomorphism of Type B with the same multiplicity constants.
\end{theorem}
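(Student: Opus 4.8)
The plan is to deform the defining data of $\varphi$ in two phases: first absorbing the eigenvalue paths of each cycle of $\sigma_\varphi$ into a single loop while keeping the unitary path $w$ frozen, and then contracting $w$ itself to a loop. The only thing requiring care is that every intermediate map stays a genuine $\ast$-homomorphism; after identifying $C(\T)$ with $\{f : f(0)=f(1)\}$, this amounts to the periodicity $\varphi_s(f)(0)=\varphi_s(f)(1)$ for all $f$ and all homotopy parameters $s$. By the block-wise construction in the proof of \cref{thomsen_1} it suffices to treat $A=C(\T)\otimes M_n$ and $B=C(\T)\otimes M_\ell$ with a single data tuple $(a,\sigma_\varphi,\lambda_1,\dots,\lambda_a,w)$, and since disjoint cycles of $\sigma_\varphi$ act on disjoint coordinate blocks I may work one cycle $(p_1\,p_2\cdots p_k)$ at a time. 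There the compatibility condition reads $\lambda_{p_{i+1}}(0)=\lambda_{p_i}(1)$ with indices mod $k$; writing $x_i:=\lambda_{p_i}(0)$, the concatenation $\lambda_{p_1}\ast\cdots\ast\lambda_{p_k}$ is a loop based at $x_1$.

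Phase 1 is the heart of the matter. Keeping $w$ (hence $w(1)=v_{\sigma_\varphi}$) and all paths outside $\{p_1,p_2\}$ fixed, I deform the pair $(\lambda_{p_1},\lambda_{p_2})$ to $(\lambda_{p_1}^s,\lambda_{p_1,p_2}^s)$ using the notation introduced above. At $s=0$ this recovers the original map up to the reparametrization that holds $\lambda_{p_2}$ constant on an initial segment (itself a periodicity-preserving homotopy), and at $s=1$ it replaces $\lambda_{p_1}$ by the constant loop at $x_1$ and $\lambda_{p_2}$ by $\lambda_{p_1,p_2}=\lambda_{p_1}\ast\lambda_{p_2}$. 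The point is that at parameter $s$ the left endpoints of $(\lambda_{p_1}^s,\lambda_{p_1,p_2}^s)$ are $(x_1,\lambda_{p_1}(1-s))$ and the right endpoints are $(\lambda_{p_1}(1-s),x_3)$, so that—together with the untouched coordinates—one has $r_p=l_{\sigma_\varphi(p)}$ for every $p$, where $l_p,r_p$ denote the endpoints of the $p$-th path. Hence conjugation by $v_{\sigma_\varphi}\otimes I_n$ carries $\varphi_s(f)(1)$ back to $\varphi_s(f)(0)$, and each $\varphi_s$ is a $\ast$-homomorphism. Iterating this absorption $k-1$ times collapses the cycle, turning $\lambda_{p_1},\dots,\lambda_{p_{k-1}}$ into constant loops at $x_1$ and $\lambda_{p_k}$ into the loop $\lambda_{p_1}\ast\cdots\ast\lambda_{p_k}$; performing it on every cycle makes all eigenvalue paths into loops.

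Phase 2 then frees $w$. After Phase 1 the matrix $\diag(f(\lambda_1(t)),\dots,f(\lambda_a(t)))$ is, at $t=0$ and $t=1$, block scalar with one block $f(x^{(c)})I_{k_c}$ for each cycle $c$; as $v_{\sigma_\varphi}$ respects this block decomposition it commutes with both endpoint matrices, so periodicity now holds for \emph{any} path $w$ with $w(0)=I$. I may therefore set $w_s(t)=w(t)\gamma(st)$, where $\gamma$ runs from $I$ to $v_{\sigma_\varphi}^{-1}$ inside the block-diagonal subgroup $\prod_c\u_{k_c}$: then $w_s(0)=I$, each $w_s(1)=v_{\sigma_\varphi}\gamma(s)$ stays block diagonal (so periodicity persists), and $w_1(1)=I$. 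The terminal map has trivial permutation, loop eigenvalue paths, and a loop $w$, so it is of Type B with the same multiplicity $a$; reassembling the summands proves the theorem.

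The step I expect to be the main obstacle is the endpoint bookkeeping in Phase 1: one must verify that the evolving lists of left and right endpoints remain matched by the \emph{fixed} permutation $v_{\sigma_\varphi}$, even as the effective cycle shrinks under repeated absorption. This is precisely what licenses keeping $w$ frozen until every path has become a loop; once that is in place, the block-scalar observation driving Phase 2 and the reductions to a single cycle and a single pair of summands are straightforward.
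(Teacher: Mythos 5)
Your proof is correct, and your Phase 1 is precisely the paper's argument: the same absorption homotopy $(\lambda_{p_1}^s,\lambda_{p_1,p_2}^s)$ performed with $w$ frozen, the same endpoint bookkeeping against the \emph{fixed} permutation $\sigma_\varphi$, iterated cycle by cycle (the paper organizes this as a transposition case, then an induction on cycle length, then a product of disjoint cycles, but the homotopy is the same one, as is the reduction to a single pair of summands).

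The genuine difference is your Phase 2, which the paper does not have --- and which is in fact needed, so you have closed a real gap rather than merely rephrased the argument. At the end of its case (1) the paper asserts that the terminal map $H(1,\cdot)$ ``is of Type B since $\lambda_{1,2}$ is a loop'' (and in case (2) that it is of Type A with associated permutation $(2,3,\ldots,n)$). Neither claim is literally true: the unitary path $u(t)=w(t)\otimes I_n$ still ends at $v_{\sigma_\varphi}\otimes I_n$, whereas a Type B data tuple forces $w(1)=v_{\sigma_0}=I_a$ --- this is exactly what \cref{prop_type_bc} uses to regard $w$ as a unitary in $C(\T)\otimes M_a$. The failure is not just notational. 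Take $n=1$, $a=2$, $\mu_1\equiv 1$, $\mu_2(t)=e^{2\pi i t}$: if the map $t\mapsto w(t)\diag\bigl(f(\mu_1(t)),f(\mu_2(t))\bigr)w(t)^{\ast}$ with $w(1)=v_{(1,2)}$ admitted a Type B tuple $(2,\sigma_0,\mu_1',\mu_2',w')$, then uniqueness and continuity of the spectral projections on $(0,1)$ would force (after relabeling) $w'(t)e_{11}w'(t)^{\ast}=w(t)e_{11}w(t)^{\ast}$ for all $t$, and at $t=1$ this reads $e_{11}=v_{(1,2)}e_{11}v_{(1,2)}=e_{22}$, a contradiction. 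So a further homotopy untwisting $w$ is required, and your Phase 2 supplies exactly the right one: after Phase 1 the diagonal matrix at $t\in\{0,1\}$ is scalar on each cycle block, $v_{\sigma_\varphi}$ lies in the connected block subgroup $\prod_c\u_{k_c}$, every element of which commutes with those endpoint matrices, so $w_s(t)=w(t)\gamma(st)$ moves through well-defined $\ast$-homomorphisms and terminates in a map whose unitary path is a loop at $I_a$ --- a genuine Type B map with the same multiplicity constants. Keeping $\sigma_\varphi$ and $w$ fixed until every eigenvalue path is a loop, and only then contracting $w(1)$ to the identity, is the correct way to finish; the paper's induction-on-cycle-length shortcut silently skips this step.
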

\begin{proof}
Let $A = \bigoplus_{j=1}^K C(\T)\otimes M_{n_j}$ and $B = \bigoplus_{i=1}^L C(\T)\otimes M_{\ell_i}$, and let $\varphi : A\to B$ be a $\ast$-homomorphism of Type A. Then, the components of $\varphi$, given by $\varphi_i : A\to C(\T)\otimes M_{\ell_i}, i=1,2,\ldots, L$, are themselves $\ast$-homomorphisms of Type A. Furthermore, each $\varphi_i$ is of the form
\[
\varphi_i(g_1,g_2,\ldots, g_K) = \diag\Big(\varphi_{i,1}(g_1), \varphi_{i,2}(g_2), \ldots, \varphi_{i,K}(g_K), 0, \ldots, 0 \Big).
\]
where each map $\varphi_{i,j} : C(\T)\otimes M_{n_j} \to C(\T)\otimes M_{\ell_i}$ is of Type A. Hence, to prove the theorem, it suffices to assume that $A = C(\T)\otimes M_n, B = C(\T)\otimes M_{na}$ and $\varphi : A\to B$ is a unital $\ast$-homomorphism of Type $A$. Let $(a, \sigma_{\varphi}, \lambda_1, \lambda_2, \ldots, \lambda_a, w)$ be the data tuple associated to $\varphi$ so that
\[
\varphi(f)(t)=u(t)\begin{pmatrix}
f(\lambda_1(t))& 0 &\hdots & 0\\
0 & f(\lambda_2(t))& \hdots & 0\\
\hdots &\hdots &\hdots & \hdots\\
0& 0 & \hdots & f(\lambda_a(t))
\end{pmatrix} u(t)^{\ast}.
\]
where $u:[0,1]\to \u_{na}$ is the path $u(t) = w(t)\otimes I_n$. We now break the proof into cases for convenience.
\begin{enumerate}
\item Suppose $\sigma_{\varphi}$ is a transposition in $\Sigma_a$, then for simplicity of notation we assume $\sigma_{\varphi} = (1,2)$. Then $\lambda_1(0) = \lambda_2(1), \lambda_1(1) = \lambda_2(0)$, and the other $\lambda_{p}$ are all loops. Consider $H:[0,1]\times A\to B$ given by
\[
H(s,f)(t) = u(t)\diag\Big(f(\lambda_1^s(t)), f(\lambda_{1,2}^s(t)), f(\lambda_3(t)), \ldots, f(\lambda_a(t)) \Big)u(t)^{\ast}.
\]
Then, $H$ is well-defined because for any $f\in A$ and $s\in [0,1]$,
\begin{eqsplit}
H(s,f)(1) &= v_{\sigma_{\varphi}}\diag\Big(f(\lambda_1(1-s)), f(\lambda_2(1)), f(\lambda_3(1)), \ldots, f(\lambda_a(1)) \Big)v_{\sigma_{\varphi}}^{\ast} \\
&= \diag\Big(f(\lambda_2(1)), f(\lambda_1(1-s)), f(\lambda_3(1)), \ldots, f(\lambda_a(1)) \Big) \\
&= \diag\Big(f(\lambda_1(0)), f(\lambda_1(1-s)), f(\lambda_3(0)), \ldots, f(\lambda_a(0)) \Big) \\
&= H(s,f)(0).
\end{eqsplit}
Furthermore, $\lambda_{1,2}^0 \sim_h \lambda_2$, so
\[
H(0,f)(t) = u(t)\diag\Big(f(\lambda_1(t)), f(\lambda_{1,2}^0(t)), f(\lambda_3(t)), \ldots, f(\lambda_a(t)) \Big)u(t)^{\ast}
\]
defines a $\ast$-homomorphism such that $H(0,\cdot) \sim_h \varphi$. Also,
\[
H(1,f)(t) = u(t)\diag\Big(f(\lambda_1(0)), f(\lambda_{1,2}(t)), f(\lambda_3(t)), \ldots, f(\lambda_a(t)) \Big)u(t)^{\ast}
\]
which is of Type B since $\lambda_{1,2}$ is a loop.
\item Now suppose $\sigma_{\varphi}$ is a cycle of length $n$, and assume by induction that the result is true for any $\ast$-homomorphism of Type A whose associated permutation is a cycle of length $<n$. Once again, for simplicity, we assume $\sigma_{\varphi} = (1,2,\ldots, n)$. Then, $\lambda_1(1) = \lambda_2(0)$, $\lambda_2(1) = \lambda_3(0)$, $\ldots, \lambda_{n-1}(1) = \lambda_n(0)$, $\lambda_n(1) = \lambda_1(0)$. Also, the other $\lambda_{p}$ are all loops. Define $H:[0,1]\times A \to B$ by
\[
H(s,f)(t) = u(t)\diag\Big(f(\lambda_1^s(t)), f(\lambda_{1,2}^s(t)), f(\lambda_3(t)), \ldots, f(\lambda_n(t)), f(\lambda_{n+1}(t)), \ldots, f(\lambda_a(t)) \Big)u(t)^{\ast}.
\]
Then, $H$ is well-defined because for any $f\in A$ and $s\in [0,1]$,
\begin{eqsplit}
H(s,f)(1) &= v_{\sigma_{\varphi}}\diag\Big(f(\lambda_1(1-s)), f(\lambda_2(1)), f(\lambda_3(1)), \ldots, f(\lambda_n(1)), f(\lambda_{n+1}(1)), \ldots, f(\lambda_a(1)) \Big)v_{\sigma_{\varphi}}^{\ast} \\
&= \diag\Big(f(\lambda_n(1)), f(\lambda_1(1-s)), f(\lambda_2(1)), \ldots, f(\lambda_{n-1}(1)), f(\lambda_{n+1}(1)), \ldots, f(\lambda_a(1)) \Big) \\
&= \diag\Big(f(\lambda_1(0)), f(\lambda_1(1-s)), f(\lambda_3(0)), \ldots, f(\lambda_n(0)), f(\lambda_{n+1}(0)), \ldots, f(\lambda_a(0)) \Big) \\
&= H(s,f)(0).
\end{eqsplit}
As before, since $\lambda_{1,2}^0 \sim_h \lambda_2$, it follows that $H(0,\cdot) \sim_h \varphi$. Now note that
\[
H(1,f)(t) = u(t)\diag\Big(f(\lambda_1(0)), f(\lambda_{1,2}(t)), f(\lambda_3(t)), \ldots, f(\lambda_n(t)), f(\lambda_{n+1}(t)), \ldots, f(\lambda_a(t)) \Big)u(t)^{\ast}.
\]
Thus, $H(1,\cdot) : A\to B$ is a $\ast$-homomorphism of Type A whose associated permutation $\tau$ is the cycle $\tau = (2,3,\ldots, n)$. By induction, $H(1,\cdot)$ is homotopic to a $\ast$-homomorphism $\psi : A\to B$ of Type B, and thus $\varphi \sim_h \psi$.
\item Now suppose $\sigma_{\varphi} \in \Sigma_a$ is any permutation, then we may express $\sigma_{\varphi}$ as a product of disjoint cycles $\sigma_{\varphi} = \sigma_1\sigma_2\ldots \sigma_k$. Then each $\sigma_i$ may be reduced to the identity permutation (keeping the multiplicity intact) by part (2). This reduces $\sigma_{\varphi}$ to the identity permutation, proving the result.
\end{enumerate}
\end{proof}

Let $\varphi : C(\T)\otimes M_n\to C(\T)\otimes M_{\ell}$ be a $\ast$-homomorphism of Type B, and let $(a, \sigma_0, \lambda_1, \lambda_2, \ldots, \lambda_a, w)$ be a data tuple associated to $\varphi$. Then, by construction $w:[0,1]\to \u_a$ is a path based at the identity, and hence defines a unitary $w \in C(\T)\otimes M_a$. Therefore, $\varphi$ takes the form
\[
\varphi(f)(t) = u(t)\diag\Big(f(\lambda_1(t)), f(\lambda_2(t)), \ldots, f(\lambda_a(t))u(t)^{\ast}
\]
where $u$ is now a unitary in $C(\T)\otimes M_m$ given by $u(t) = \diag(w(t)\otimes I_n,I_{\ell-na})$. We conclude that

\begin{prop}\label{prop_type_bc}
A $\ast$-homomorphism between two circle algebras of Type B is unitarily equivalent to one of Type C.
\end{prop}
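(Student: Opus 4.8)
The plan is to read the implementing unitary straight off the data tuple of $\varphi$, using the fact that for a Type B map the conjugating path $w$ is a genuine loop and therefore an honest unitary of the circle algebra, rather than merely a path with distinct endpoints as in Type A.

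I would first treat the basic case $A = C(\T)\otimes M_n$ and $B = C(\T)\otimes M_{\ell}$. As recorded in the paragraph preceding the statement, a Type B map $\varphi$ has a data tuple $(a,\sigma_0,\lambda_1,\ldots,\lambda_a,w)$ with $\sigma_\varphi = \sigma_0$, so that $w(1) = v_{\sigma_0} = I_a$ and $w$ is a loop in $\u_a$; hence $u(t) = \diag(w(t)\otimes I_n, I_{\ell - na})$ is a loop in $\u_\ell$ and defines a unitary $u \in C(\T)\otimes M_\ell$. I would then set $\psi(f)(t) = \diag\big(f(\lambda_1(t)), \ldots, f(\lambda_a(t)), 0_{\ell - na}\big)$. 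Since $\sigma_\varphi = \sigma_0$ forces $\lambda_p(0) = \lambda_p(1)$ for each $p$, the $\lambda_p$ are loops, so $\psi$ is a well-defined $\ast$-homomorphism, and it is of Type C by construction because its conjugating path is the constant path $w_0$. From the displayed form of $\varphi$ one has $\varphi(f) = u\,\psi(f)\,u^{\ast}$ for all $f$, and $u$ is a unitary in $B$, so $\varphi$ and $\psi$ are unitarily equivalent.

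For the general case $A = \bigoplus_{j} C(\T)\otimes M_{n_j}$ and $B = \bigoplus_{i} C(\T)\otimes M_{\ell_i}$, I would assemble the implementing unitary block by block. Each component $\varphi_i$ is a direct sum $\diag(\varphi_{i,1},\ldots,\varphi_{i,K},0)$ of Type B maps $\varphi_{i,j} : C(\T)\otimes M_{n_j}\to C(\T)\otimes M_{a_{i,j}n_j}$, and the basic case produces unitaries $u_{i,j}$ together with Type C maps $\psi_{i,j}$ satisfying $\varphi_{i,j} = u_{i,j}\,\psi_{i,j}\,u_{i,j}^{\ast}$. Putting $U_i = \diag(u_{i,1},\ldots,u_{i,K}, I)$ and $U = (U_1,\ldots,U_L)$ yields a unitary in $B$ with $U^{\ast}\varphi(\cdot)U$ equal to the Type C map built from the $\psi_{i,j}$, which is exactly the required form.

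I do not expect a serious obstacle here: the entire content is the observation, essentially already made above the statement, that the condition $w(1) = v_{\sigma_0} = I$ coming from $\sigma_\varphi = \sigma_0$ upgrades the conjugating path to a genuine unitary of $C(\T)\otimes M_\ell$, which can then simply be conjugated away. The only point requiring care is the bookkeeping in the direct-sum case, namely that the zero-padding blocks and the size constraints $\sum_j a_{i,j}n_j \leq \ell_i$ match when forming $U$, and that the resulting map meets the definition of Type C componentwise; both are routine.
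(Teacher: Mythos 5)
Your proof is correct and takes essentially the same approach as the paper: for a Type B map the condition $\sigma_\varphi=\sigma_0$ turns the conjugating path $w$ into a loop based at the identity, hence a genuine unitary $u(t)=\diag(w(t)\otimes I_n, I_{\ell-na})$ of the target circle algebra, and conjugating it away leaves exactly the Type C map with the same $\lambda_p$ (which are loops since $\lambda_p(0)=\lambda_p(1)$). Your explicit block-by-block assembly of the implementing unitary in the direct-sum case merely spells out what the paper handles componentwise.
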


Now let $\varphi : C(\T)\otimes M_n\to C(\T)\otimes M_{\ell}$ be a $\ast$-homomorphism of Type C, and let $(a, \sigma_0, \lambda_1, \lambda_2, \ldots, \lambda_a, w_0)$ be a data tuple associated to $\varphi$. Then, by construction, each $\lambda_{p}$ is a loop in $\T$. If $b_{p}$ denotes the winding number of the loop $\lambda_{p}$, then there exists $c_{p} \in \T$ such that
\[
\lambda_{p} \sim_h c_{p}\delta_{b_{p}}
\]
where $\delta_n : \T\to \T$ denotes the map $z \mapsto z^n$. Hence, $\varphi$ is homotopic to a $\ast$-homomorphism of Type C whose data set takes the form $(a, \sigma_0, c_1\delta_{b_1}, c_2\delta_{b_2}, \ldots, c_a\delta_{b_a}, w_0)$. Now fix $1\leq p\leq a$, and choose $d_{p} \in \R$ such that $c_{p} = e^{2\pi i d_{p}}$. Then, the function $F_{p}:[0,1]\times C(\T) \to C(\T)$ given by
\[
F_{p}(s,f)(z) = f(e^{2\pi i sd_{p}}z^{b_{p}})
\]
is a homotopy from $c_{p}\delta_{b_{p}}$ to $\delta_{b_{p}}$. Applying these homotopies along the diagonal as before, we see that $\varphi$ is homotopic to a $\ast$-homomorphism of Type C whose data set is $(a, \sigma_0, \delta_{b_1}, \delta_{b_2}, \ldots, \delta_{b_a}, w_0)$. Applying this componentwise to an arbitrary $\ast$-homomorphism of Type C, we conclude that

\begin{prop}\label{prop_type_cd}
A $\ast$-homomorphism between two circle algebras of Type C is homotopic to one of Type D.
\end{prop}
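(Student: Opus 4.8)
The plan is to run everything blockwise and reduce to a single pair of matrix blocks, then deform the loops appearing in the data tuple into standard power maps while keeping them loops at every stage of the homotopy, so that each intermediate map remains a bona fide $\ast$-homomorphism of Type C. Concretely, I would first reduce to the case $A = C(\T)\otimes M_n$, $B = C(\T)\otimes M_{\ell}$. A Type C map $\varphi$ between direct sums is assembled from blocks $\varphi_{i,j}:C(\T)\otimes M_{n_j}\to C(\T)\otimes M_{\ell_i}$ each of Type C, and Type D is defined precisely by demanding every block be of Type D. Since for a Type C map the conjugating unitary is trivial (as $w=w_0$ and $\sigma_{\varphi}=\sigma_0$ force $u=I$), $\varphi$ is a genuine diagonal assembly of its blocks; hence homotopies of the individual blocks through Type C maps assemble into a homotopy of $\varphi$, and it suffices to treat one block.

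So fix a block with data tuple $(a,\sigma_0,\lambda_1,\ldots,\lambda_a,w_0)$. Because $\sigma_{\varphi}=\sigma_0$, the gluing condition $\lambda_{\sigma_0(p)}(0)=\lambda_p(1)$ reduces to $\lambda_p(0)=\lambda_p(1)$, so each $\lambda_p$ is a loop, based at $c_p:=\lambda_p(1)\in\T$, of some winding number $b_p$. The first deformation invokes the classification of maps $\T\to\T$ by winding number: as based loops one has $\lambda_p \sim_h c_p\delta_{b_p}$, and crucially this homotopy can be chosen through loops based at $c_p$. Applying these homotopies along the diagonal (using that $u$ is trivial) gives a homotopy through Type C maps from $\varphi$ to the map with data $(a,\sigma_0,c_1\delta_{b_1},\ldots,c_a\delta_{b_a},w_0)$.

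The second deformation removes the constants $c_p$. Writing $c_p=e^{2\pi i d_p}$ and setting $F_p(s,f)(z)=f(e^{2\pi i s d_p}z^{b_p})$, each fixed $s$ yields a $\ast$-homomorphism $C(\T)\to C(\T)$ (composition with the continuous map $z\mapsto e^{2\pi i s d_p}z^{b_p}$), and at each stage this map is a loop based at $e^{2\pi i s d_p}$, so the data tuple stays valid with $\sigma_0$. As $s$ runs from $1$ to $0$ this carries $c_p\delta_{b_p}$ to $\delta_{b_p}$; assembling along the diagonal produces a homotopy through Type C maps ending at the map with data $(a,\sigma_0,\delta_{b_1},\ldots,\delta_{b_a},w_0)$. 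Since each $\delta_{b_p}$ is a loop based at $1$, this final map is of Type D, and reassembling over all blocks finishes the argument.

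The main obstacle I anticipate is bookkeeping rather than genuine difficulty: one must verify that \emph{every} intermediate map is honestly a $\ast$-homomorphism into $C(\T)\otimes M_{\ell}$, which comes down to keeping each deformed $\lambda_p$ a loop at every value of the homotopy parameter (an arbitrary path would only give sections over an interval, not an element of $C(\T)\otimes M_{\ell}$). Both deformations above are engineered so that the loop condition persists throughout, and the triviality of $u$ for Type C maps is exactly what permits the diagonal assembly of the block homotopies without any additional conjugation.
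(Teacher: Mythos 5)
Your proposal is correct and follows essentially the same route as the paper: reduce to a single block, homotope each loop $\lambda_p$ to $c_p\delta_{b_p}$ via the winding-number classification, and then remove the constants $c_p$ using exactly the rotation homotopy $F_p(s,f)(z) = f(e^{2\pi i s d_p}z^{b_p})$, assembling everything along the diagonal. The only difference is that you spell out the bookkeeping (triviality of $u$ for Type C maps, and that every intermediate $\lambda_p$ remains a loop) which the paper leaves implicit.
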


Since we are interesting in continuous homology theories applied to inductive limits of circle algebras, it suffices for our purposes to consider homomorphisms of Type D. Therefore, we will henceforth refer to $\ast$-homomorphisms of Type D as \emph{diagonal} maps. 

\section{Calculation of $F_m(A)$ when $A$ is an A$\T$-algebra}\label{sec_rat_hom_at}

Recall that, for a C*-algebra $A$ and for $m\geq 1$, $F_m(A) =\pi_m(\U(A))\otimes \Q$. The aim of this section is to compute $F_m(\varphi)$ when $\varphi$ is a diagonal $\ast$-homomorphism between circle algebras, and then use it to compute $F_m(A)$ when $A$ is an A$\T$-algebra. 

\begin{prop}\label{prop_iso_fm}
If $A= \bigoplus_{j=1}^K C(\T)\otimes M_{n_j}$, then for $m\geq 1$, $F_m(A)$ is given by
\begin{eqsplit}
F_m(A) &\cong \bigoplus_{j=1}^{K} \pi_m(\mathcal{U}_{n_j})\otimes\Q\oplus\pi_{m+1}(\mathcal{U}_{n_j})\otimes\Q \\
&\cong \bigoplus_{j=1}^K \Q^{d(m,j)} \text{ where } d(m,j) = \begin{cases}
1 &: \text{ if } 1\leq m \leq 2n_j-1, \\
0 &: \text{ otherwise}.
\end{cases}
\end{eqsplit}
\end{prop}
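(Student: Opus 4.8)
The plan is to reduce to a single summand and then identify the relevant unitary group with a free loop space. Since $F_m$ is a homology theory (\cref{prop: continuous_homology}), the split short exact sequence obtained by collapsing all but one summand shows that $F_m$ is additive on finite direct sums, so that
\[
F_m(A) \cong \bigoplus_{j=1}^K F_m(C(\T)\otimes M_{n_j}).
\]
It therefore suffices to treat a single summand $C(\T)\otimes M_n$. As this algebra is unital, the identification recorded in \cref{pre} gives $F_m(C(\T)\otimes M_n) \cong \pi_m\big(\u(C(\T)\otimes M_n)\big)\otimes \Q$. Now a unitary in $C(\T)\otimes M_n \cong C(\T, M_n)$ is precisely a continuous map $\T\to \u_n$, so $\u(C(\T)\otimes M_n) = C(\T,\u_n)$, the free loop space of $\u_n$, with the compact-open topology (which agrees with the norm topology because $\T$ is compact).

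Next I would exploit the evaluation fibration $\mathrm{ev}: C(\T,\u_n)\to \u_n$, $\gamma\mapsto\gamma(1)$, whose fiber over the identity is the based loop space $\Omega\u_n$. This fibration admits the section sending a point to the constant loop at it, so the associated long exact sequence of homotopy groups splits into short exact sequences
\[
0 \to \pi_m(\Omega\u_n) \to \pi_m(C(\T,\u_n)) \to \pi_m(\u_n) \to 0,
\]
which are moreover split (for $m=1$ the groups are abelian, since $C(\T,\u_n)$ is a topological group). Using $\pi_m(\Omega\u_n)\cong\pi_{m+1}(\u_n)$ and tensoring with $\Q$ then yields the first asserted isomorphism
\[
F_m(C(\T)\otimes M_n) \cong \pi_m(\u_n)\otimes\Q \,\oplus\, \pi_{m+1}(\u_n)\otimes\Q.
\]

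For the second isomorphism I would invoke the classical fact that $\u_n$ has the rational homotopy type of the product of odd spheres $S^1\times S^3\times\cdots\times S^{2n-1}$, so that $\pi_k(\u_n)\otimes\Q\cong\Q$ exactly when $k$ is odd with $1\le k\le 2n-1$, and vanishes otherwise. A parity check completes the argument: if $m$ is odd with $1\le m\le 2n-1$, the first summand contributes $\Q$ while the second vanishes; if $m$ is even with $1\le m\le 2n-2$, the roles are reversed since $m+1$ is then odd and in range; and both summands vanish once $m\ge 2n$. In every case this reproduces $\Q^{d(m,j)}$ with the stated $d$, and reassembling over $j$ gives the proposition.

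The computation is largely routine once the free loop space picture is in place. The only points that genuinely require care are the verification that $\mathrm{ev}$ is a fibration whose section splits the homotopy long exact sequence (so that the decomposition is a true direct sum rather than merely a nonsplit extension), and the parity bookkeeping in the last step; the rational homotopy of $\u_n$ is classical and may simply be cited.
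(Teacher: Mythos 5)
Your proof is correct, and it reaches the paper's conclusion by what is conceptually the same decomposition carried out with different machinery. The paper stays entirely in the C*-category: writing $D=\bigoplus_{j=1}^K M_{n_j}$, it uses the split exact sequence $0\to C_{\ast}(\T,D)\to A\to D\to 0$ given by evaluation at $1\in\T$, the homology-theory axioms for $(F_m)$ from \cref{prop: continuous_homology}, and the suspension isomorphism $F_m(SD)\cong F_{m+1}(D)$, then quotes $F_m(M_n)\cong\pi_m(\u_n)\otimes\Q$ from \cite{apurva_pv_af}; it also treats all summands at once rather than reducing to one. Your argument is the topological shadow of this: the identification $\u(C(\T)\otimes M_n)\cong C(\T,\u_n)$ turns the paper's split exact sequence into your evaluation fibration with its constant-loop section, and the suspension shift into $\pi_m(\Omega\,\u_n)\cong\pi_{m+1}(\u_n)$; the rational homotopy of $\u_n$ that you cite is the same input the paper takes from \cite{apurva_pv_af}. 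What your route buys is self-containedness and transparency: the splitting is verified by hand, and one sees exactly where the summands $\pi_m(\u_n)\otimes\Q$ and $\pi_{m+1}(\u_n)\otimes\Q$ live; the cost is the point-set care you correctly flag (that $\mathrm{ev}$ is a fibration, which holds because $\{1\}\hookrightarrow\T$ is a cofibration). What the paper's route buys is brevity, since exactness and the suspension shift come packaged with the already-established framework. One further simplification available to you: because $-\otimes\Q$ is exact and every extension of $\Q$-vector spaces splits, the short exact sequences need not be split before rationalizing, so the section (and the $m=1$ abelianness remark) is a convenience rather than a necessity.
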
 
\begin{proof}
If $D = \bigoplus_{j=1}^K M_{n_j}$, then there is a split exact sequence $0 \to C_{\ast}(\T, D) \to A \to D \to 0$, where $C_{\ast}(\T, D) = \{f \in C(\T,D) : f(1) = 0\} \cong SD$, the suspension of $D$. Since $F_m$ is a homology theory, it follows that $F_m(A) \cong F_m(D)\oplus F_m(SD) \cong F_m(D)\oplus F_{m+1}(D)$. The result now follows from additivity of $F_m(\cdot)$ and the fact that
\[
F_m(M_n) = \pi_m(\u_n)\otimes \Q \cong \begin{cases}
\Q &: \text{ if } 1 \leq m\leq 2n-1, m \text{ odd} \\
0 &: \text{ otherwise}.
\end{cases}
\]
by \cite[Example 1.6]{apurva_pv_af}.
\end{proof}

Since it appears frequently below, we define
\[
V_n^m := \pi_m(\u_n)\otimes \Q \oplus \pi_{m+1}(\u_n)\otimes \Q.
\]
For convenience of notation, we will often treat entries in $V_n^m$ as pairs $(x,y)$ with $x \in \pi_m(\u_n)\otimes \Q$ and $y\in \pi_{m+1}(\u_n)\otimes \Q$ (even though both terms cannot be non-zero simultaneously).

\begin{prop}\label{prop_fm_circle}
For $1\leq p \leq a$, let $\lambda_{p} : \T\to \T$ be continuous loops based at $1$ with winding number $w(\lambda_{p})$. Let $\varphi:C(\T)\otimes M_n\to C(\T)\otimes M_{\ell}$ be the diagonal $\ast$-homomorphism given by
\[
f\mapsto\diag(f\circ\lambda_1,f\circ\lambda_2,\hdots,f\circ\lambda_a, 0_{\ell-na}).
\]
Then for any $m\geq 1$, $F_m(\varphi): V_n^m\to V_{\ell}^m$ is given by
\[
(x,y) \mapsto \left(ax, by\right)
\]
where $b = \sum_{p=1}^a w(\lambda_{p})$.
\end{prop}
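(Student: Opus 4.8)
The plan is to split the computation along the two summands of $V_n^m = F_m(M_n)\oplus F_{m+1}(M_n)$ coming from the split exact sequence used in \cref{prop_iso_fm}, and to exploit the naturality of the homology theory $F_m$. First, by homotopy invariance of $F_m$ I may assume each $\lambda_p=\delta_{b_p}$ is the standard loop $z\mapsto z^{b_p}$ with $b_p=w(\lambda_p)$: a based homotopy from $\lambda_p$ to $\delta_{b_p}$ (as used in the reduction preceding \cref{prop_type_cd}) induces a homotopy of diagonal $\ast$-homomorphisms, which $F_m$ does not detect.

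Next I would set up the relevant morphism of split exact sequences. Let $q:C(\T)\otimes M_n\to M_n$ be evaluation at $1$, with kernel $SM_n=\{f:f(1)=0\}$ and splitting $s$ given by constants, and let $q',s',SM_\ell$ be the analogues for $M_\ell$. Since each $\lambda_p$ is based at $1$, one has $\varphi(f)(1)=\diag(f(1),\ldots,f(1),0_{\ell-na})$, so $q'\varphi=\varphi_0\,q$, where $\varphi_0:M_n\to M_\ell$ is the multiplicity-$a$ embedding $X\mapsto\diag(X,\ldots,X,0_{\ell-na})$; hence $\varphi$ restricts to a map $\varphi_S:SM_n\to SM_\ell$. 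Because $\varphi$ carries constant functions to constant functions, one also has $\varphi\, s=s'\varphi_0$. Thus $\varphi$ is a morphism of split exact sequences, and applying $F_m$ (so that the connecting maps vanish) shows that $F_m(\varphi)$ is block diagonal with respect to $V_n^m=F_m(M_n)\oplus F_m(SM_n)$, with first block $F_m(\varphi_0)$ and second block $F_m(\varphi_S)$. For the first block, $\varphi_0$ is a homomorphism of finite-dimensional algebras with multiplicity matrix $[a]$, so by the multiplicity-matrix computation recalled in \cref{pre} (from \cite{apurva_pv_af}) $F_m(\varphi_0)$ is multiplication by $a$; under the identification this is the assignment $x\mapsto ax$.

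The substance lies in the second block. I would write $\varphi_S=\bigoplus_{p=1}^a \iota_p\circ\mu_{b_p}$, where $\mu_b:SM_n\to SM_n$ is precomposition with $\delta_b$ and $\iota_p:SM_n\to SM_\ell$ is the $p$-th corner embedding. By additivity of $F_m$ over orthogonal summands and functoriality, $F_m(\varphi_S)=\sum_{p=1}^a F_m(\iota_p)\circ F_m(\mu_{b_p})$. Each $\iota_p$ is the suspension of a multiplicity-$1$ embedding $M_n\hookrightarrow M_\ell$, so by naturality of the suspension isomorphism $F_m(SM_n)\cong F_{m+1}(M_n)$ the map $F_m(\iota_p)$ is the identity. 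Everything therefore reduces to the claim $F_m(\mu_b)=b$. To see this, I would identify $\U(SM_n)$ with the based loop space $\Omega\,\U(M_n)$ (a quasi-unitary in $SM_n$ is precisely a based loop $\T\to\U(M_n)$ taking value $0$ at $1$), under which the suspension isomorphism is exactly $\pi_m(\Omega\,\U(M_n))\otimes\Q\cong\pi_{m+1}(\U(M_n))\otimes\Q$. Under this identification $\U(\mu_b)$ is precomposition of based loops with the degree-$b$ map $\delta_b$; smashing $\delta_b$ with $S^m$ gives a degree-$b$ self-map of $S^{m+1}$, and precomposition with a degree-$b$ self-map of a sphere acts as multiplication by $b$ on homotopy groups. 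Tensoring with $\Q$ yields $F_m(\mu_b)=b$, whence $F_m(\varphi_S)=\sum_p b_p=b$, i.e.\ the assignment $y\mapsto by$.

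The \textbf{main obstacle} is precisely this last computation $F_m(\mu_b)=b$: one must correctly transport the elementary topological fact that a degree-$b$ self-map of a sphere acts by multiplication by $b$ through the identification $\U(SM_n)=\Omega\,\U(M_n)$ and the suspension isomorphism, making sure the degree-$b$ reparametrisation of the loop direction matches the sphere degree. By contrast, the block-diagonal decomposition, the additivity, the reduction to $\delta_{b_p}$, and the multiplicity-$a$ computation on the first block are all routine applications of naturality and results already in hand.
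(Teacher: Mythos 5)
Your proposal is correct and follows essentially the same route as the paper's proof: both reduce to the power maps $\delta_{b_p}$ by homotopy invariance, use the evaluation-at-$1$ split exact sequence to make $F_m(\varphi)$ block diagonal, use orthogonal additivity (via the pointwise product of quasi-unitary-valued maps) to sum the single-block contributions, and reduce the second block to the fact that precomposition with a winding-number-$b$ loop acts as multiplication by $b$ on $\pi_{m+1}(\mathcal{U}_n)\otimes\mathbb{Q}$. The only differences are organizational --- the paper first splits $\varphi$ into multiplicity-one orthogonal pieces and then applies the split exact sequence to each, whereas you apply the split exact sequence globally and decompose only the suspension part --- and that you actually supply the smash-product/degree argument for the final multiplication-by-$b$ step, which the paper asserts without proof.
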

The pair $(a,b)$ will henceforth be referred to as the \emph{signature} of the diagonal map $\varphi$.
\begin{proof}
Let $\iota : C(\T)\otimes M_{na}\to C(\T)\otimes M_{\ell}$ be the inclusion map $x \mapsto \diag(x,0)$. By the naturality of the isomorphism in \cref{prop_iso_fm} and \cite[Lemma 2.2]{apurva_pv_af},
\[
F_m(\iota) = \begin{cases}
\text{id} &: \text{ if } 1 \leq m\leq 2na-1, \\
0 &: \text{ otherwise}.
\end{cases}
\]
Therefore, it suffices to consider the case when $\ell=na$. Furthermore, we may assume without loss of generality that $a=2$. In that case, for $p \in \{1,2\}$, let $\varphi_{p} : C(\T)\otimes M_n\to C(\T)\otimes M_n$ denote the map $f\mapsto f\circ \lambda_{p}$. Then $\varphi(f) = \diag(\varphi_1(f), \varphi_2(f))$ for all $f\in C(\T)\otimes M_n$. For any $f\in C(\T,\u_n)$, $\varphi(f) = \widehat{\varphi}_1(f)\cdot \widehat{\varphi}_2(f)$, where $\widehat{\varphi}_{p} : C(\T,\u_n) \to C(\T,\u_{2n})$ are the maps
\[
\widehat{\varphi}_1(f)=\begin{pmatrix}
\varphi_1(f) & 0\\
0 & 1
\end{pmatrix}\text{ and } \widehat{\varphi}_2(f)=\begin{pmatrix}
1 & 0\\
0 & \varphi_2(f)
\end{pmatrix}.
\] 
Once again, since inclusion induces the identity map, we conclude that
\[
F_m(\varphi) = F_m(\varphi_1) + F_m(\varphi_2).
\]
Therefore, it remains to compute $F_m(\psi)$ when $\psi : C(\T)\otimes M_n\to C(\T)\otimes M_n$ is the $\ast$-homomorphism given by
\[
\psi(f) = f\circ \lambda,
\]
where $\lambda : \T\to \T$ is a continuous loop based at $1$. In that case, fix $m\geq 1$, and observe that $F_m$ is a homotopy invariant functor. Therefore, we may assume without loss of generality that $\lambda(z) = z^b$ where $b = w(\lambda)$ is the winding number of $\lambda$. Now let $C_{\ast}(\T,M_n) := \{f\in C(\T,M_n) : f(1) = 0\}$ and consider the split exact sequence
\[
\xymatrix{
0\ar[r] & C_{\ast}(\T,M_n) \ar[d]^{\widehat{\psi}} \ar[r] &C(\T,M_n)\ar[d]^{\psi}\ar[r]^{q} & M_n\ar[d]^{\text{id}}\ar[r] & 0 \\
0\ar[r] & C_{\ast}(\T,M_n) \ar[r] & C(\T,M_n)\ar[r]^{q} & M_n\ar[r] & 0
}
\]
where $q$ is the evaluation map at $1\in\T$ and $\widehat{\psi}$ is the restriction of $\psi$ to $C_{\ast}(\T,M_n)$. Under the isomorphism $F_m(C(\T)\otimes M_n) \cong V_n^m$, the map $F_m(\psi) : V_n^m\to V_n^m$ is given by
\[
F_m(\psi)(x,y) = (F_m(\text{id})(x), F_m(\widehat{\psi})(y)).
\]
Under the isomorphism $\pi_m(\u(C_{\ast}(\T, M_n))) \cong \pi_{m+1}(\u_n)$, $\widehat{\psi}$ induces the map $\pi_m(\widehat{\psi})([G]) = b[G]$. 	Thus, $F_m(\widehat{\psi})$ is realized as multiplication by $b$, proving the result.
\end{proof}

Now suppose $A = \bigoplus_{j=1}^K C(\T)\otimes M_{n_j}$ and $B = \bigoplus_{i=1}^L C(\T)\otimes M_{\ell_i}$, and suppose $\varphi : A\to B$ is a diagonal $\ast$-homomorphism. Then, each map
\[
\varphi_{i,j} : C(\T)\otimes M_{n_j} \hookrightarrow A \xrightarrow{\varphi} B \to C(\T)\otimes M_{\ell_i}
\]
is a diagonal $\ast$-homomorphism as in \cref{prop_fm_circle}. If $(a_{i,j}, b_{i,j})$ is the signature of $\varphi_{i,j}$, then we obtain an $L\times K$ matrix $\Phi \in M_{L\times K}(\N_0\times \N_0)$ given by
\[
\Phi := \begin{pmatrix}
(a_{1,1}, b_{1,1}) & (a_{1,2}, b_{1,2}) & \ldots & (a_{1,K}, b_{1,K}) \\
(a_{2,1}, b_{2,1}) & (a_{2,2}, b_{2,2}) & \ldots & (a_{2,K}, b_{2,K}) \\
\vdots & \vdots & \vdots & \vdots \\
(a_{L,1}, b_{L,1}) & (a_{L,2}, b_{L,2}) & \ldots & (a_{L,K}, b_{L,K})
\end{pmatrix}
\]
This is called the \emph{signature} matrix of $\varphi$. \cref{prop_iso_fm} and \cref{prop_fm_circle} together yield the following theorem.

\begin{theorem}\label{thm_homomorphism_circle}
Let $A = \bigoplus_{j=1}^K C(\T)\otimes M_{n_j}$ and $B = \bigoplus_{i=1}^L C(\T)\otimes M_{\ell_i}$, and suppose $\varphi : A\to B$ is a diagonal $\ast$-homomorphism with signature matrix $\Phi$ as above. Then, for any $m\geq 1$,
\[
F_m(\varphi) : \bigoplus_{j=1}^K V_{n_j}^m \to \bigoplus_{i=1}^L V_{\ell_i}^m
\]
is given by multiplication by $\Phi$, in the sense that
\[
F_m(\varphi)((x_j,y_j)_{1\leq j\leq K}) = \left(\sum_{j=1}^K a_{i,j}x_j, \sum_{j=1}^K b_{i,j}y_j\right)_{1\leq i\leq L}
\]
\end{theorem}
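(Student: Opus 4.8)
The plan is to reduce the computation to the single-summand case of \cref{prop_fm_circle} by exploiting the additivity and functoriality of the homology theory $F_m$. First I would use that $F_m$, being a homology theory, is additive on finite direct sums. Writing $A_j = C(\T)\otimes M_{n_j}$ and $B_i = C(\T)\otimes M_{\ell_i}$, the canonical inclusions $\iota_j : A_j \to A$ and coordinate projections $q_j : A \to A_j$ satisfy $q_{j'}\circ \iota_j = \mathrm{id}$ when $j = j'$ and $0$ otherwise, and $\sum_j \iota_j\circ q_j = \mathrm{id}_A$; applying $F_m$ and combining with the identifications $F_m(A_j)\cong V_{n_j}^m$ from \cref{prop_iso_fm} realizes $F_m(A)\cong\bigoplus_j V_{n_j}^m$ in such a way that $F_m(\iota_j)$ and $F_m(q_j)$ become the inclusion and projection of the $j$-th summand. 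The same applies to $B=\bigoplus_i B_i$, with coordinate projections $p_i : B\to B_i$.

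Next I would identify the matrix blocks. With respect to these decompositions, the $(i,j)$-block of $F_m(\varphi)$ is by definition $F_m(p_i)\circ F_m(\varphi)\circ F_m(\iota_j)$, which by functoriality equals $F_m(p_i\circ\varphi\circ\iota_j) = F_m(\varphi_{i,j})$, the component map introduced just before the theorem. Since $\varphi$ is diagonal, each $\varphi_{i,j}$ is itself a diagonal $\ast$-homomorphism of exactly the shape treated in \cref{prop_fm_circle}, with signature $(a_{i,j}, b_{i,j})$. Invoking \cref{prop_fm_circle} for each pair gives $F_m(\varphi_{i,j})(x,y) = (a_{i,j}x, b_{i,j}y)$ as a map $V_{n_j}^m\to V_{\ell_i}^m$, and assembling the blocks shows that $F_m(\varphi)$ sends $(x_j,y_j)_j$ to $\big(\sum_j a_{i,j}x_j,\ \sum_j b_{i,j}y_j\big)_i$, which is precisely multiplication by the signature matrix $\Phi$.

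I expect the only step warranting genuine care to be the first: confirming that the isomorphism of \cref{prop_iso_fm} is compatible with the direct-sum structure, so that $F_m(\iota_j)$ and $F_m(p_i)$ are genuinely the summand inclusion and projection under $F_m(A)\cong\bigoplus_j V_{n_j}^m$ and $F_m(B)\cong\bigoplus_i V_{\ell_i}^m$. This reduces to the additivity of $F_m$ together with the naturality of the split exact sequence used to prove \cref{prop_iso_fm}, so there is no real obstruction and the remainder is a purely formal assembly of blocks.
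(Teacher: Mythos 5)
Your proposal is correct and is essentially the paper's own argument: the paper offers no separate proof, stating only that \cref{prop_iso_fm} and \cref{prop_fm_circle} ``together yield'' the theorem, and what you have written out (additivity of $F_m$ realizing $F_m(A)\cong\bigoplus_j V_{n_j}^m$ compatibly with inclusions and projections, functoriality identifying the $(i,j)$-block with $F_m(\varphi_{i,j})$, and \cref{prop_fm_circle} applied to each block) is exactly the formal assembly the paper leaves implicit. The compatibility point you flag at the end, resolved by naturality of the split exact sequence in \cref{prop_iso_fm}, is precisely what the paper's tacit identification relies on.
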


Finally, with the results of \cref{sec_hom_circle}, we arrive at the first main result of the paper. A few words of caution before we describe the result though. Given an A$\T$-algebra $A$, we may write $A$ as an inductive limit $A = \lim_{n\to\infty} (A_n,\psi_n)$ where each $A_n$ is a circle algebra, and $\psi_n : A_n\to A_{n+1}$ is a $\ast$-homomorphism. It is not, in general, possible to replace $\psi_n$ by a suitable diagonal $\ast$-homomorphism. However, when computing $F_m(\cdot)$ (or any other continuous homology theory), one may use the results of \cref{sec_hom_circle} to replace $\psi_n$ by maps that are diagonal. Now, the resulting inductive limit algebra is \emph{not necessarily} isomorphic to the algebra $A$. However, the functor $F_m(\cdot)$ is blind to the difference, which is what allows this theorem to work.

\begin{theorem}\label{thm_at_alg_rat_hom}
Let $A$ be an A$\T$-algebra that is expressed as an inductive limit of circle algebras
\[
A_1\xrightarrow{\psi_1} A_2\xrightarrow{\psi_2} A_3\to \ldots \to A.
\]
Then, for each $m\geq 1$, the group $F_m(A)$ may be computed as the inductive limit of a sequence
\[
F_m(A_1) \xrightarrow{\Phi_1} F_m(A_2)\xrightarrow{\Phi_2} F_m(A_3)\to \ldots \to F_m(A),
\]
where the connecting maps are given by multiplication by the signature matrices of certain diagonal $\ast$-homomorphism $\varphi_n : A_n\to A_{n+1}$.
\end{theorem}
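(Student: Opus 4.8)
The plan is to combine the continuity of $F_m$ with the structural reductions of \cref{sec_hom_circle}, so that the arbitrary connecting maps $\psi_n$ are replaced, \emph{at the level of $F_m$ only}, by diagonal maps whose effect is already computed in \cref{thm_homomorphism_circle}. First I would invoke \cref{prop: continuous_homology}: since $A = \lim (A_n,\psi_n)$, continuity gives a natural identification $F_m(A) = \lim\big(F_m(A_n), F_m(\psi_n)\big)$. The theorem then reduces to the following claim: for each $n$ there is a diagonal $\ast$-homomorphism $\varphi_n : A_n\to A_{n+1}$ with $F_m(\varphi_n) = F_m(\psi_n)$. Granting this, the two directed systems $\{F_m(A_n),F_m(\psi_n)\}$ and $\{F_m(A_n),F_m(\varphi_n)\}$ of $\Q$-vector spaces are literally identical, hence have the same inductive limit; and by \cref{thm_homomorphism_circle} each $F_m(\varphi_n)$ is multiplication by the signature matrix $\Phi_n$ of $\varphi_n$. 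I would stress that we never need the inductive limit of the system $(A_n,\varphi_n)$ to recover $A$ itself (in general it will not); it is only the induced system on $F_m$ that matters, which is exactly the point of the remark preceding the theorem.

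To produce $\varphi_n$ I would run the chain of reductions of \cref{sec_hom_circle} on the given $\psi_n$. By \cref{thomsen_1}(1), $\psi_n$ is approximately unitarily equivalent to a $\ast$-homomorphism of Type A with the same characteristic functions; by \cref{thm_homotopy_type_ab} this is homotopic to one of Type B; by \cref{prop_type_bc} that is unitarily equivalent to one of Type C; and by \cref{prop_type_cd} the latter is homotopic to a diagonal (Type D) map $\varphi_n$. The homotopy steps preserve $F_m$ at once, because $F_m$ is a homotopy-invariant functor (being a homology theory). It therefore remains only to check that the two \emph{equivalence} steps --- approximate unitary equivalence from \cref{thomsen_1} and unitary equivalence from \cref{prop_type_bc} --- also leave $F_m$ unchanged.

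This last point is where I expect the genuine work to lie. Unlike the matrix-stable $K$-groups, the functors $F_m$ depend on the matrix size, so invariance under inner automorphisms is \emph{not} automatic and must be argued. I would handle it in two stages. For a unitary equivalence $\psi = \mathrm{Ad}(u)\circ\psi'$ (with $u$ the loop-valued unitary $\diag(w\otimes I_n, I_{\ell-na})$ produced in \cref{prop_type_bc}, which need not lie in the identity component), the task is to show that $\mathrm{Ad}(u)$ induces the identity on $\pi_m(\U(-))\otimes\Q$. This I would extract from a Whitehead rotation argument: after including one further matrix level, $v\mapsto \diag(uvu^{\ast},*)$ is homotopic, through quasi-unitary-valued maps, to $v\mapsto \diag(v,*)$, so conjugation by $u$ becomes the inclusion; combining this with the computation of $F_m$ on the inclusion $x\mapsto\diag(x,0)$ already recorded in the proof of \cref{prop_fm_circle} then forces $\mathrm{Ad}(u)$ to act trivially on $F_m$. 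For the approximate case I would use that a class in $\pi_m(\U(A_n))$ is represented by a continuous map out of the \emph{compact} sphere $S^m$: writing $\psi_n$ as the point-norm limit of maps $\mathrm{Ad}(u_k)\circ(\text{Type A})$, the convergence is uniform on the image of such a representative, so for large $k$ the two composites into $\U(A_{n+1})$ are uniformly close and hence homotopic, reducing approximate unitary equivalence to the inner-automorphism case just settled. Assembling the chain then yields $F_m(\psi_n)=F_m(\varphi_n)$, and with it the desired description of $F_m(A)$ as $\lim\big(F_m(A_n),\Phi_n\big)$.

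The main obstacle, to be clear, is precisely the verification in the third paragraph that these two equivalences preserve the nonstable functor $F_m$; the reduction via continuity and the homotopy steps are routine given the results already established.
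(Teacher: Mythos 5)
Your proposal is correct, and its skeleton (reduce to diagonal maps via the Type A--B--C--D chain of \cref{sec_hom_circle}, then combine continuity of $F_m$ with \cref{thm_homomorphism_circle}) is the same as the paper's; but your treatment of the two non-homotopy steps is genuinely different, and that is exactly where the paper goes another way. The paper never proves that inner automorphisms or approximate unitary equivalence leave $F_m$ unchanged. Instead it manipulates the inductive \emph{system}: for the homotopy steps it observes that the induced systems on $F_m$ coincide, so the limits agree even though the limit algebras (its $B$ and $A'$) may fail to be isomorphic to $A$; for the Type B $\to$ Type C step it uses that replacing connecting maps by unitarily equivalent ones does not change the limit algebra up to isomorphism (an exact intertwining, $B \cong \lim (A_n,\delta_n)$); and for the initial passage to Type A maps it implicitly invokes the analogous fact for approximate unitary equivalence (Elliott-style approximate intertwining) when it says ``we may assume each $\psi_i$ is of Type A.'' You instead keep the system fixed and prove the stronger, level-wise statement $F_m(\psi_n)=F_m(\varphi_n)$, which obliges you to supply precisely the two arguments the paper avoids: that $\mathrm{Ad}(u)$ acts trivially on $F_m$ (your Whitehead rotation combined with injectivity of $F_m$ of the corner inclusion $x\mapsto \diag(x,0)$ from the proof of \cref{prop_fm_circle} --- correctly flagged as non-automatic for nonstable invariants, and the injectivity does hold here because $F_m(C(\T)\otimes M_{na})$ vanishes exactly in the degrees where $F_m(\iota)$ is zero), and that approximate unitary equivalence is harmless (your compactness-of-spheres argument, which reduces it to the inner case). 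Both proofs are valid; the paper's is shorter because it outsources the equivalence steps to intertwining isomorphisms of limit algebras, while yours buys a sharper conclusion --- equality of the induced maps at every stage, not merely isomorphic limits --- and makes explicit the approximate-intertwining point that the paper leaves unspoken.
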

\begin{proof}
Fix $m\geq 1$. Given an inductive sequence $A_1\xrightarrow{\psi_1} A_2 \xrightarrow{\psi_2} A_3 \to \ldots \to A$, we may assume by Thomsen's theorem (\cref{thomsen_1}) that each $\psi_i$ is a map of Type A. For each $i \in \N$, \cref{thm_homotopy_type_ab} tells us that there is a $\ast$-homomorphism $\eta_i : A_i \to A_{i+1}$ of Type B such that $\psi_i \sim_h \eta_i$. Suppose $B$ is the inductive limit of the sequence
\[
A_1\xrightarrow{\eta_1} A_2 \xrightarrow{\eta_2} A_3 \to \ldots \to B.
\]
Then, it follows that $F_m(A)$ is naturally isomorphic to $F_m(B)$ (even though $A$ and $B$ may not be isomorphic). Once again, by \cref{prop_type_bc}, we may replace $\eta_i$ by another map $\delta_i : A_i \to A_{i+1}$ that is of Type C and is unitarily equivalent to $\eta_i$, so that $B \cong \lim (A_n, \delta_n)$. Finally, by \cref{prop_type_cd}, there are diagonal $\ast$-homomorphism $\varphi_i : A_i\to A_{i+1}$ such that $\varphi_i \sim_h \delta_i$. If $A'$ denotes the inductive limit of the sequence
\[
A_1\xrightarrow{\varphi_1} A_2 \xrightarrow{\varphi_2} A_3 \to \ldots \to A',
\]
then $F_m(A) \cong F_m(A')$. The result now follows by appealing to \cref{thm_homomorphism_circle}.
\end{proof}

The next two examples illustrate this theorem.

\begin{ex}\label{ex_bunce_deddens}
Let $B$ be the Bunce-Deddens algebra given as the inductive limit of the sequence
\[
C(\T)\xrightarrow{\varphi_0} C(\T)\otimes M_2\xrightarrow{\varphi_1} C(\T)\otimes M_4\xrightarrow{\varphi_2} \to \ldots\to B
\]
where $\varphi_n(f):C(\T)\otimes M_{2^n}\to C(\T)\otimes M_{2^{n+1}}$ is given by 
\[
\varphi_n(f)(t) = (u(t)\otimes I_{2^n})\begin{pmatrix}
f(e^{\pi i t}) & 0\\
0 & f(e^{\pi i(1+t)})
\end{pmatrix}(u(t)\otimes I_{2^n})^{\ast}
\]
and $u:[0,1]\to \u_2$ is a path of unitaries in $M_2$ satisfying $u(0)=I$ and $u(1)=\begin{pmatrix}
0 & 1\\
1 & 0
\end{pmatrix}$. In order to calculate $F_m(B)$, we may replacing $\varphi_n$ by the recipe laid out in \cref{thm_homotopy_type_ab}, and consider the inductive sequence
\[
C(\T)\xrightarrow{\psi_0} C(\T)\otimes M_2\xrightarrow{\psi_1} C(\T)\otimes M_4\to \ldots
\]
where each $\psi_n$ is a diagonal $\ast$-homomorphism given by
\[
\psi_n(f)(z) =\begin{pmatrix}
f(1) & 0\\
0 & f(z)
\end{pmatrix}
\]
Here, the identity map on $\T$ has winding number $1$ and $\psi_n$ has multiplicity $2$. Hence for $m\geq 1$, $\psi_n$ has signature $(2,1)$. In other words, $F_m(\psi_n) : V_{2^n}^m \to V_{2^{n+1}}^m$ is given by
\[
F_m(\psi_n)(x,y)= (2x,y).
\]
Now,
\[
F_m(C(\T)\otimes M_{2^n}) = \begin{cases}
\Q\oplus 0 &: \text{ if } 1 \leq m\leq 2^{n+1}-1, m \text{ odd}\\
0\oplus \Q &: \text{ if } 1\leq m\leq 2^{n+1}-1, m \text{ even}\\
0 &: \text{ otherwise}.
\end{cases}
\]
Therefore, $F_m(B) \cong \Q$ for all $m \in \N$.
\end{ex}

\begin{ex}\label{ex_goodearl}
Let $(r_n)$ and $(p_n)$ be sequences of positive integers such that $r_n$ divides $r_{n+1}$ and $p_n< r_{n+1}/r_n$ for each $n \in \N$. Choose a finite set $F_n = \{z_{n,1}, z_{n,2}, \ldots, z_{n,p_n}\} \subset \T$, and define $\psi_n : C(\T)\otimes M_{r_n} \to C(\T)\otimes M_{r_{n+1}}$ by
\[
\psi_n(f)(z)=\diag(f(z_{n,1}), f(z_{n,2}),\ldots,f(z_{n,p_n}), f(z),f(z),\ldots, f(z)).
\]
Consider the algebra $G$ given as an inductive limit of the sequence
\[
C(\T)\otimes M_{r_1}\xrightarrow{\psi_1} C(\T)\otimes M_{r_2}\xrightarrow{\psi_2} C(\T)\otimes M_{r_3}\to \ldots\to G.
\]
If $\varphi_n : C(\T)\otimes M_{r_n} \to C(\T)\otimes M_{r_{n+1}}$ denotes the map
\[
\varphi_n(f)(z) = \diag(f(1), f(1), \ldots, f(1), f(z),f(z),\ldots, f(z)),
\]
then $\psi_n\sim_h \varphi_n$. Observe that for $m\geq 1$, the signature of $\varphi_n$ is $(r_{n+1}/r_n, r_{n+1}/r_n-p_n)$. Therefore, for each $m\geq 1$, $F_m(\psi_n) : V_{r_n}^m \to V_{r_{n+1}}^m$ is thus given by 
\[
F_m(\psi_n)(x,y)= ((r_{n+1}/r_n)x,(r_{n+1}/r_n-p_n)y)).
\]
Hence, $F_m(G)=\Q$ for all $m\geq 1$ (since $r_{n+1}/r_n > p_n \geq 1$). Note that if $\bigcup_{n=k}^{\infty} F_n$ is dense in $\T$ for each $k \in \N$, then $G$ is simple and thus a Goodearl algebra.
\end{ex}

\section{K-Stability}\label{sec_k_stability_at}

We now turn to a proof of \cref{mainthm_k_stable} and begin with the definition of K-stability. As mentioned before, this was first studied by Thomsen \cite{thomsen}. Its rational counterpart was also discussed by Farjoun and Schochet \cite{farjoun}.

\begin{defn}\label{defn:k_stable}
Let $A$ be a C*-algebra and $j\geq 2$. Define $\iota^A_j: M_{j-1}(A)\to M_j(A)$ to be the natural inclusion map
\[
a\mapsto\begin{pmatrix}
a&0\\
0&0
\end{pmatrix}.
\]
$A$ is said to be \emph{$K$-stable} if $G_k(\iota^A_j): G_k(M_{j-1}(A))\to G_k(M_j(A))$ is an isomorphism for all $k\geq 0$ and all $j\geq 2$. $A$ is said to be \emph{rationally $K$-stable} if $F_m(\iota^A_j):F_m(M_{j-1}(A))\to F_m(M_j(A))$ is an isomorphism for all $m\geq 1$ and all $j\geq 2$.
\end{defn}

Note that, for a $K$-stable C*-algebra, $G_k(A) \cong K_{k+1}(A)$ and for a rationally $K$-stable $C^{\ast}$-algebra, $F_m(A) \cong K_{m+1}(A)\otimes \Q$. A variety of interesting C*-algebras are known to be $K$-stable (see \cite[Remark 1.5]{apurva_pv_cx}). Clearly, $K$-stability implies rational $K$-stability. By \cite[Theorem B]{apurva_pv_af}, the converse is true for AF-algebras. However, the converse is not true in general (see \cite[Example 2.1]{apurva_pv_cx_rat}). The aim of this section is to show that, for the class of A$\T$-algebras, both these notions are equivalent. We begin with the following observation.

\begin{rem}\label{rem_generic_hom_limit}
Let $A$ be an A$\T$-algebra, given as an inductive limit $A = \lim (A_n, \varphi_n)$. By \cref{thomsen_1}, we may assume that each $\varphi_n$ is of Type A. In \cref{thm_homotopy_type_ab}, we proved that there is another inductive sequence $(A_n, \psi_n)$, where each $\psi_n$ is a $\ast$-homomorphism of Type B such that $\varphi_n \sim_h \psi_n$ for each $n\in \N$. Hence, the inductive limit $A' = \lim (A_n, \psi_n)$ has the property that $G_k(A) \cong G_k(A')$ for each $k\geq 0$. \\

Now suppose $A$ is $K$-stable, then we prove that $A'$ is also $K$-stable (the same argument applies for rational $K$-stability as well). To do this, write $\alpha_n : A_n\to A$ and $\beta_n : A_n\to A'$ be the $\ast$-homomorphisms defining $A$ and $A'$ respectively. In other words, $\alpha_{n+1}\circ \varphi_n = \alpha_n$ and $\beta_{n+1}\circ \psi_n = \beta_n$ for all $n\in \N$. Fix $k\geq 0$. For simplicity, we will show that $\eta := G_k(\iota^{A'}_2) : G_k(A')\to G_k(M_2(A'))$ is an isomorphism, under the assumption that $\rho := G_k(\iota^A_2) : G_k(A) \to G_k(M_2(A))$ is an isomorphism. Now observe that the following diagrams commute
\[
\xymatrix{
& G_k(A_i)\ar[rr]^{(\iota^{A_i}_2)_{\ast}}\ar[ld]_{(\alpha_i)_{\ast}}\ar[dd]_{(\varphi_i)_{\ast}} & & G_k(M_2(A_i))\ar[ld]_{(\alpha^{(2)}_i)_{\ast}}\ar[dd]_{(\varphi^{(2)}_i)_{\ast}} \\
G_k(A)\ar@{..>}[rr]^{\rho} && G_k(M_2(A)) \\
& G_k(A_{i+1})\ar[rr]_{(\iota^{A_{i+1}}_2)_{\ast}}\ar[ul]^{{(\alpha_{i+1})_{\ast}}} & & G_k(M_2(A_{i+1}))\ar[ul]^{{(\alpha^{(2)}_{i+1})_{\ast}}} \\
& G_k(A_i)\ar[rr]^{(\iota^{A_i}_2)_{\ast}}\ar[ld]_{(\beta_i)_{\ast}}\ar[dd]_{(\psi_i)_{\ast}} & & G_k(M_2(A_i))\ar[ld]_{(\beta^{(2)}_i)_{\ast}}\ar[dd]_{(\psi^{(2)}_i)_{\ast}} \\
G_k(A')\ar@{..>}[rr]^{\eta} && G_k(M_2(A')) \\
& G_k(A_{i+1})\ar[rr]_{(\iota^{A_{i+1}}_2)_{\ast}}\ar[ul]^{{(\beta_{i+1})_{\ast}}} & & G_k(M_2(A_{i+1}))\ar[ul]^{{(\beta^{(2)}_{i+1})_{\ast}}}
}
\]
(Note that if $\theta : C\to D$ is a $\ast$-homomorphism, we write $\theta^{(2)} : M_2(C)\to M_2(D)$ for the induced $\ast$-homomorphism). Since $(\varphi_i)_{\ast} = (\psi_i)_{\ast}$ and $(\varphi_i^{(2)})_{\ast} = (\psi_i^{(2)})_{\ast}$, the universal property of the inductive limit also tells us that there are isomorphisms $\lambda : G_k(A)\to G_k(A')$ and $\mu : G_k(M_2(A))\to G_k(M_2(A'))$ such that the following diagrams commute
\[
\xymatrix{
& G_k(A_i)\ar[dl]_{(\alpha_i)_{\ast}}\ar[rd]^{(\beta_i)_{\ast}} & & & G_k(M_2(A_i))\ar[dl]_{(\alpha^{(2)}_i)_{\ast}}\ar[rd]^{(\beta^{(2)}_i)_{\ast}} \\
G_k(A)\ar[rr]^{\lambda} & & G_k(A') & G_k(M_2(A))\ar[rr]^{\mu} && G_k(M_2(A'))
}
\]
A short argument shows that
\[
\eta \circ \lambda \circ (\alpha_i)_{\ast} = (\beta^{(2)}_i)_{\ast}\circ (\iota^{A_i}_2)_{\ast} = \mu \circ \rho\circ (\alpha_i)_{\ast}.
\]
Hence, $\eta\circ \lambda = \mu\circ \rho$. In this expression, both $\lambda$ and $\mu$ are isomorphisms. Therefore, if $\rho$ is an isomorphism, so is $\eta$. Now, the same argument applies for each such map $G_k(M_{n-1}(A'))\to G_k(M_n(A'))$ and we conclude that $A'$ is $K$-stable. Since the relationship between $A$ and $A'$ is symmetric, we see that $A$ is $K$-stable if and only if $A'$ is $K$-stable.\\

Therefore, when considering questions of $K$-stability (or rational $K$-stability), we need only consider the case where the connection homomorphisms in the inductive limit are of Type B. In fact, we may repeat this argument using \cref{prop_type_bc} and \cref{prop_type_cd} and assume that the connecting maps are diagonal. This is what we now do.
\end{rem}

Suppose $A$ is an A$\T$-algebra, and $(A_n,\varphi_n)$ is an inductive sequence such that $A = \lim (A_n,\varphi_n)$, and each $\varphi_n$ is a diagonal $\ast$-homomorphism. For each $n \in \N$, let $B_n$ be the quotient of $A_n$ obtained by evaluating each summand of $A_n$ at $1\in \T$. Since the connecting maps are diagonal, we get an inductive sequence $(B_n, \psi_n)$ of finite dimensional C*-algebras and maps $\pi_n : A_n\to B_n$ such that $\psi_n\circ \pi_n = \pi_{n+1}\circ \varphi_n$ for all $n\in \N$. If $B = \lim (B_n, \psi_n)$, then

\begin{lem}\label{lem_rat_k_stable_af}
If $A$ is rationally K-stable, so is $B$.
\end{lem}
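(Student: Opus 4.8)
The plan is to realize $B$ as a retract of $A$ in a way that is compatible with matrix amplification, so that rational $K$-stability descends by a formal argument. The two structural maps are already available: for each $n$, let $s_n : B_n \to A_n$ be the inclusion of $B_n = \bigoplus_j M_{n_j}$ into $A_n = \bigoplus_j C(\T)\otimes M_{n_j}$ as constant functions, and recall that $\pi_n : A_n \to B_n$ is evaluation at $1\in\T$, so that $\pi_n\circ s_n = \text{id}_{B_n}$.

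First I would check that $\{s_n\}$ and $\{\pi_n\}$ are morphisms of inductive systems. The identity $\psi_n\circ\pi_n = \pi_{n+1}\circ\varphi_n$ is part of the hypothesis, so $\{\pi_n\}$ passes to a $\ast$-homomorphism $\pi : A\to B$. For $\{s_n\}$ the point is that a diagonal (Type D) map carries constant functions to constant functions: its conjugating unitary is the identity and precomposition of a constant by any loop returns that same constant, so $\varphi_n(s_n(c))$ is the constant function $\diag(c,\ldots,c,0)$. A short computation then gives $\varphi_n\circ s_n = s_{n+1}\circ\psi_n$, so $\{s_n\}$ yields $s : B\to A$. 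Passing to the limit, $\pi\circ s = \text{id}_B$, exhibiting $B$ as a retract of $A$.

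Next I would record that this retraction is compatible with the stabilization maps of \cref{defn:k_stable}. Writing $s^{(j)},\pi^{(j)}$ for the amplifications, one has $\pi^{(j)}\circ s^{(j)} = \text{id}_{M_j(B)}$ together with $\iota^A_j\circ s^{(j-1)} = s^{(j)}\circ\iota^B_j$ and $\pi^{(j)}\circ\iota^A_j = \iota^B_j\circ\pi^{(j-1)}$, since applying a $\ast$-homomorphism entrywise commutes with the corner inclusion $a\mapsto\diag(a,0)$. Applying the functor $F_m$ and using functoriality, these relations say precisely that $F_m(\iota^B_j)$ is a retract of $F_m(\iota^A_j)$ in the arrow category. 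A purely formal diagram chase then shows that a retract of an isomorphism is an isomorphism: if $g$ is invertible and $f$ sits in a retract square over $g$ whose vertical pairs split, then $f$ is injective (its splitting section is injective) and surjective (transport a preimage under $g$ back along the retraction). Since rational $K$-stability of $A$ gives that each $F_m(\iota^A_j)$ is an isomorphism for $m\geq 1$ and $j\geq 2$, the same follows for $F_m(\iota^B_j)$, which is exactly rational $K$-stability of $B$.

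I expect the only genuine content to lie in the second paragraph, namely verifying that the diagonal form of the connecting maps forces $\varphi_n\circ s_n = s_{n+1}\circ\psi_n$, i.e.\ that constants are carried to constants. For a general map of Type A the conjugating path $u(t)$ is nontrivial and this fails, so it is essential that we have already reduced, via \cref{thm_homotopy_type_ab}, \cref{prop_type_bc}, \cref{prop_type_cd}, and the discussion in \cref{rem_generic_hom_limit}, to the case of diagonal connecting maps. Everything after the construction of the retraction is formal and, notably, requires neither the explicit computation of $F_m$ nor the signature matrices of \cref{thm_homomorphism_circle}.
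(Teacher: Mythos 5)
Your proof is correct, but it takes a genuinely different route from the paper's. The paper argues computationally: for even $m$ it observes $F_m(B) = F_m(M_2(B)) = 0$ because $B$ is AF; for odd $m$ it uses the extensions $0 \to C_{\ast}(\T,B_n) \to A_n \xrightarrow{\pi_n} B_n \to 0$, the suspension isomorphism $F_m(C_{\ast}(\T,B_n)) \cong F_{m+1}(B_n) = 0$, and passage to inductive limits to produce isomorphisms $F_m(A) \cong F_m(B)$ and $F_m(M_2(A)) \cong F_m(M_2(B))$ intertwining $F_m(\iota_2^A)$ with $F_m(\iota_2^B)$, whence the latter is an isomorphism. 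You instead promote the levelwise splitting $\pi_n \circ s_n = \mathrm{id}_{B_n}$ to the limit, exhibiting $B$ as a C*-algebra retract of $A$ compatibly with the corner inclusions of \cref{defn:k_stable}, and conclude by the purely formal fact that a retract of an isomorphism (in the arrow category) is an isomorphism. Your route is more elementary and strictly more general: it uses no vanishing results, no exact sequences, and no computation of $F_m$ whatsoever, and the identical diagram with $G_k$ in place of $F_m$ shows that \emph{integral} $K$-stability also descends from $A$ to $B$ --- something the paper's argument cannot deliver, since it leans on the rational vanishing $F_{m+1}(B_n)=0$, which fails integrally (the even homotopy of $\u_n$ has torsion). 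What the paper's computation buys in exchange is a stronger conclusion than the lemma itself: natural isomorphisms $F_m(A)\cong F_m(B)$, not merely the implication between the two stability properties.

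One inaccuracy in your closing paragraph: it is not true that maps of Type A fail to carry constants to constants. For a Type A block map the conjugating unitary is $u(t) = \diag(w(t)\otimes I_n, I_{\ell-na})$, and the image of a constant $c$ is $u(t)\,\diag(I_a\otimes c, 0)\,u(t)^{\ast} = \diag(I_a\otimes c, 0)$, because $w(t)\otimes I_n$ commutes with $I_a\otimes c$; so \emph{every} type of map preserves constants. The genuine role of diagonality (Type D) in your argument is that the loops $\lambda_p$ are based at $1$: this is what makes evaluation at $1\in\T$ compatible with the connecting maps, i.e.\ what guarantees $\ker\pi_n \subseteq \ker(\pi_{n+1}\circ\varphi_n)$, so that the quotient system $(B_n,\psi_n)$ with $\psi_n\circ\pi_n = \pi_{n+1}\circ\varphi_n$ exists at all and both $\{s_n\}$ and $\{\pi_n\}$ are morphisms of inductive systems. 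With that correction, your argument goes through exactly as written.
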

\begin{proof}
It suffices to show that the map $F_m(\iota^B_2) : F_m(B)\to F_m(M_2(B))$ is an isomorphism for each $m\geq 1$. If $m$ is even, $F_m(B) = F_m(M_2(B)) = 0$ by \cite[Lemma 3.2]{apurva_pv_af}, so let $m \in \N$ be odd. For each $n\in \N$, consider the short exact sequence
\[
0 \to C_{\ast}(\T, B_n) \to A_n \xrightarrow{\pi_n} B_n \to 0
\]
Since the connecting maps $\varphi_n : A_n\to A_{n+1}$ are diagonal, the restriction gives a $\ast$-homomorphism $\widetilde{\varphi_n} : C_{\ast}(\T, B_n) \to C_{\ast}(\T, B_{n+1})$, and we have a commuting diagram of extensions
\[
\xymatrix{
0\ar[r] & C_{\ast}(\T, B_n)\ar[r]\ar[d]_{\widetilde{\varphi_n}} & A_n\ar[r]^{\pi_n}\ar[d]^{\varphi_n} & B_n\ar[r]\ar[d]^{\psi_n} & 0 \\
0\ar[r] & C_{\ast}(\T, B_{n+1})\ar[r] & A_{n+1}\ar[r]^{\pi_{n+1}} & B_{n+1}\ar[r] & 0 \\
}
\]
Now, $F_m(C_{\ast}(\T, B_j)) \cong F_{m+1}(B_j) = 0$ for all $j\in \N$ by \cite[Lemma 3.2]{apurva_pv_af}, and thus $F_m(\pi_n) : F_m(A_n) \to F_m(B_n)$ is an isomorphism. This induces an isomorphism $\theta : F_m(A) \to F_m(B)$. \\

Furthermore, the same argument applies with $M_2(A_n)$ instead of $A_n$, and we obtain an isomorphism $F_m(\pi_n^{(2)}) : F_m(M_2(A_n))\to F_m(M_2(B_n))$ such that the following diagram commutes
\[
\xymatrixcolsep{3pc}\xymatrix{
F_m(A_n)\ar[r]^{F_m(\pi_n)}\ar[d]_{F_m(\iota^{A_n}_2)} & F_m(B_n)\ar[d]^{F_m(\iota^{B_n}_2)} \\
F_m(M_2(A_n))\ar[r]^{F_m(\pi_n^{(2)})} & F_m(M_2(B_n))
}
\]
Once again, there is an induced isomorphism $\rho : F_m(M_2(A))\to F_m(M_2(B))$. Since each horizontal map is an isomorphism, we obtain a commuting diagram at the level of inductive limits
\[
\xymatrix{
F_m(A)\ar[r]^{\theta}\ar[d]_{F_m(\iota^{A}_2)} & F_m(B)\ar[d]^{F_m(\iota^{B}_2)} \\
F_m(M_2(A))\ar[r]^{\rho} & F_m(M_2(B))
}
\]
Since $F_m(\iota^A_2)$ is an isomorphism, so is $F_m(\iota^B_2)$.
\end{proof}

We now need to revisit some notation from \cite[Section 3]{apurva_pv_af} and adapt it to our setting here. If $C$ is a finite dimensional C*-algebra, we write
\[
\min\dim(C) = \min\{\text{square root of the dimension of a simple summand of } C\}.
\]
In other words, if $C = \bigoplus_{j=1}^K M_{n_j}$, then $\min\dim(C) = \min\{n_j : 1\leq j\leq K\}$. Also, we write $C^{(j)}$ to be the direct sum of all simple summands of $C$ of dimension equal to $j^2$ (adopting the convention that the direct sum over an empty index set is the zero C*-algebra). Similarly, $C^{(>j)}$ denotes the direct sum of all simple summands of $C$ whose dimension is $>j^2$, and $C^{(<j)}$ is the direct sum of all simple summands of $C$ whose dimension is $<j^2$.  Hence,
\[
C = C^{(<j)}\oplus C^{(j)}\oplus C^{(>j)}.
\] 
Now if $A = C(\T)\otimes C$ is a circle algebra and $C$ is finite dimensional, we write $\min\dim(A) = \min\dim(C)$, $A^{(j)} := C(\T)\otimes C^{(j)}$, and $A^{(>j)}$ and $A^{(<j)}$ are also defined analogously. \\

We are now ready to prove \cref{mainthm_k_stable}, and we do so using the following lemmas. The first of these lemmas is an analogue (and indeed a consequence) of \cite[Lemma 3.7]{apurva_pv_af}.

\begin{lem}\label{lem_rat_k_stable_min_dim}
Let $A$ be a rationally $K$-stable A$\T$-algebra, and suppose $A = \lim (A_p,\varphi_p)$ where each $A_p$ is a circle algebra and each $\varphi_p$ is a diagonal $\ast$-homomorphism. Then, for each $m \in \N$, there is a sequence $(A_{m,p}, \varphi_p^{m})$ of circle algebras with diagonal connecting maps such that $A = \lim_{p\to \infty} (A_{m,p}, \varphi_p^{m})$ and
\[
\min\dim(A_{m,p}) \geq m
\]
for all $p \in \N$.
\end{lem}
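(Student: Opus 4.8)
The plan is to deduce this from its AF counterpart \cite[Lemma 3.7]{apurva_pv_af} by passing to the finite-dimensional quotients and then transporting the resulting decomposition back to the circle algebras.

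First I would use the AF system $(B_n,\psi_n)$ constructed just above, where $B_n$ is the quotient of $A_n$ by evaluation at $1\in\T$ and $\psi_n\circ\pi_n=\pi_{n+1}\circ\varphi_n$. Because each $\varphi_p$ is diagonal, evaluation at $1$ sends the signature matrix of $\varphi_p$ to its $a$-part (cf. \cref{prop_fm_circle,thm_homomorphism_circle}), which is precisely the multiplicity matrix of $\psi_p$; moreover a summand $C(\T)\otimes M_n$ of $A_p$ contributes the summand $M_n$ to $B_p$, so $\min\dim(A_p)=\min\dim(B_p)$ for every $p$. By \cref{lem_rat_k_stable_af}, $B=\lim(B_n,\psi_n)$ is a rationally $K$-stable AF-algebra, and its defining system carries exactly the matrix sizes and connecting multiplicities of the $a$-parts of our circle system.

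Next I would apply \cite[Lemma 3.7]{apurva_pv_af} to $B$. For the given $m$ this produces a subsequence $(p_k)$ together with the purged AF system obtained by discarding from each $B_{p_k}$ the summands of dimension $<m^2$ and compressing the composite connecting maps onto the remaining summands, whose limit is again $B$ and whose minimal dimension is at least $m$. I would then transport this recipe verbatim to the circle level: set $A_{m,p}:=A_{p_k}^{(m)}\oplus A_{p_k}^{(>m)}$ and let $\varphi^m_p$ be the compression of the composite $\varphi_{p_{k+1}-1}\circ\cdots\circ\varphi_{p_k}$ onto these large summands. A composite of diagonal maps is diagonal, and a compression of a diagonal map is diagonal, so each $\varphi^m_p$ is a diagonal map; its $a$-part is the purged multiplicity matrix supplied by the AF lemma, evaluation at $1$ carries $(A_{m,p},\varphi^m_p)$ onto the purged AF system, and $\min\dim(A_{m,p})\ge m$ by construction.

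The hard part will be the remaining step: upgrading this matching of the AF (hence $F_m$) data to a genuine $C^*$-isomorphism $A\cong\lim(A_{m,p},\varphi^m_p)$. I must be careful here, since the homotopy-level replacements of \cref{sec_hom_circle} preserve only the functors $F_m$ and $G_k$ and not the algebra itself, so the purged limit cannot simply be declared equal to $A$. Instead I would build an intertwining between the original system along $(p_k)$ and the purged system, using the corner compressions $A_{p_k}\to A_{m,p}$ in one direction and diagonal embeddings rerouting the small-summand part of each connecting map into the large summands in the other. The content to extract from rational $K$-stability, via the proof of the AF lemma, is that after thinning $(p_k)$ this rerouting is absorbed up to approximate unitary equivalence, so that the triangles commute to within a summable error on finite generating sets and Elliott's approximate intertwining theorem applies; since approximate unitary equivalence preserves inductive limits, the purged circle system then reconstitutes $A$. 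The genuinely circle-specific difficulty, absent in the AF case, is that the rerouting maps must be chosen with winding-number data compatible with the original $\varphi_p$, so that the circle systems (and not merely their images under $F_m$) intertwine; controlling these characteristic functions via \cref{thomsen_1} is where the real work lies.
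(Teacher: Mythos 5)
Your first two steps match the paper's proof: pass to the AF quotient system $(B_{m,p},\psi^m_p)$, note that $\min\dim$ and multiplicity data are preserved because the maps are diagonal, invoke \cref{lem_rat_k_stable_af}, and then apply \cite[Lemma 3.7]{apurva_pv_af} to purge the small summands. But the step you flag as ``the hard part'' --- recovering $A\cong\lim(A_{m,p},\varphi^m_p)$ --- is exactly where you go off course, and it reveals a missed idea. What the AF lemma supplies (and what the paper uses) is an \emph{orphan} subsequence: stages $n_j$ at which the small summand $B^{(m)}_{m,n_j}$ receives \emph{zero} multiplicity from the previous stage. Since the circle-level connecting maps are diagonal, a zero multiplicity entry forces the corresponding component map $C(\T)\otimes M_{n_j}\to C(\T)\otimes M_{\ell_i}$ to be \emph{identically zero} --- not merely zero on $F_m$, nor zero up to homotopy. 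Hence the composite connecting maps factor exactly through the large summands, $\iota_j\circ\pi_j\circ\varphi^m_{n_j,n_{j-1}}=\varphi^m_{n_j,n_{j-1}}$, both triangles of the intertwining diagram commute on the nose, and the isomorphism of $A$ with the purged limit follows from the elementary exact one-sided intertwining of \cite[Exercise 6.8]{rordam}. No approximate intertwining, and no control of characteristic functions, is needed.

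Consequently, your proposed final step is both unnecessary and unsupported. There is nothing to ``reroute'': the whole point of the orphan subsequence is that the small summands receive nothing, so the small-summand part of each composite connecting map is zero rather than something to be absorbed. Your claim that rational $K$-stability makes the rerouting ``absorbed up to approximate unitary equivalence after thinning'' is not established by anything in this paper or in \cite{apurva_pv_af}, and Elliott-type approximate intertwining plus \cref{thomsen_1} would at best be a much heavier replacement for a one-line exact argument. Your preliminary worry is also moot: the homotopy replacements of \cref{sec_hom_circle} (which indeed only preserve the functors $F_m$ and $G_k$) play no role \emph{inside} this lemma, because its hypothesis already presents $A$ itself as a limit with diagonal connecting maps; the purging operates on that genuine presentation of $A$. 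A final cosmetic difference: the paper proves the lemma by induction on $m$, discarding only the size-exactly-$m$ orphan summands at each stage, rather than discarding all summands of size less than $m$ in one pass as you do.
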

\begin{proof}
The result is true if $m=1$, so we may assume that the result is true for $0 \leq i\leq m$, and now construct the sequence $(A_{m+1,p}, \varphi_p^{m+1})$. Let $(A_{m,p}, \varphi_p^m)$ be an inductive sequence such that $A = \lim_{p\to \infty} (A_{m,p}, \varphi_p^m)$ and $\min\dim(A_{m,p}) \geq m$ for all $p \in \N$. Taking quotients as before, we let $(B_{m,p}, \psi_p^{m})$ be the associated sequence of finite dimensional algebras obtained from $(A_{m,p},\varphi_p^{m})$, and let $\pi_p^{m} : A_{m,p} \to B_{m,p}$ to be the natural quotient maps (evaluation at $1\in \T$). If $B_m := \lim (B_{m,p}, \psi_p^{m})$, then $B_m$ is rationally $K$-stable by \cref{lem_rat_k_stable_af}. \\

We now adopt the approach of \cite[Lemma 3.7]{apurva_pv_af}. By hypothesis, $\min\dim(A_{m,p})\geq m$ for each $p \in \N$. If $\min\dim(A_{m,p})\geq m+1$ for all but finitely many $p$, then we may ignore these finitely many terms and write $A_{m+1,p} = A_{m,p}$ and $\psi_p^{m+1} = \psi_p^{m}$ for all $n\in \N$. Therefore, we may assume that $\min\dim(A_{m,p}) = m$ for infinitely many $p\in \N$, and also that $\min\dim(A_{m,1}) = m$. \\

Since the connecting maps $\varphi_p^{m} : A_{m,p}\to A_{m,p+1}$ are diagonal maps, it follows that the connecting maps $\psi_p^{m} : B_{m,p} \to B_{m,p+1}$ are injective. By the argument of \cite[Lemma 3.7]{apurva_pv_af}, there is a subsequence $(B_{m,n_j}, \psi_{n_j}^{m})$ such that $B_{m,n_j}^{(m)}$ is an orphan for each $j \in \N$ (in the sense that $B_{m,n_j}^{(m)}$ is not the target of any arrow emanating from $B_{m,n_j-1}$). In the sequence $(A_{m,p}, \varphi^{m}_p)$, this means that $A_{m,n_j}^{(m)}$ must also be an orphan. Let $\iota_j : A_{m,n_j}^{(>m)} \to A_{m,n_j}$ and $\pi_j : A_{m,n_j}\to A_{m,n_j}^{(>m)}$ be the natural inclusion and quotient maps respectively. Then, it follows that
\[
\iota_j \circ \pi_j \circ \varphi_{n_j,n_{j-1}}^{m} = \varphi_{n_j,n_{j-1}}^{m}
\]
for all $j\geq 1$ with the convention that $n_0 = 1$. Hence, the following diagram commutes
\[
\xymatrixcolsep{4pc}\xymatrix{
\ldots\ar[r] & A_{m,n_{j-1}}\ar[rr]^{\varphi^{m}_{n_j,n_{j-1}}}\ar[d]_{\pi_j\circ\varphi^{m}_{n_j,n_{j-1}}} && A_{m,n_j}\ar[d]^{\pi_{j+1}\circ\varphi^{m}_{n_{j+1},n_j}}\ar[r]^{\varphi^{m}_{n_{j+1},n_j}} & \ldots \\
\ldots\ar[r] & A_{m,n_j}^{(>m)}\ar[rr]_{\pi_{j+1}\circ\varphi^{m}_{n_{j+1},n_j}\circ \iota_j}\ar[rru]^{\iota_j} && A_{m,n_{j+1}}^{(>m)}\ar[r] & \ldots
}
\]
We set $(A_{m+1,j}, \varphi^{m+1}_j)$ to be the terms in the lower row. Then, it follows from \cite[Exercise 6.8]{rordam}, that $\lim (A_{m+1,j}, \varphi^{m+1}_j) \cong A$. Furthermore, by construction, we have $\min\dim(A_{m+1,j})\geq m+1$ for all $j\in \N$. Finally, since each $\varphi_j^{m}$ is a diagonal $\ast$-homomorphism, so is $\varphi_j^{m+1}$.
\end{proof}

We are now in a position to prove the first part of \cref{mainthm_k_stable}. The argument follows the same line of reasoning as that of \cite[Theorem 3.8]{apurva_pv_af}.

\begin{theorem}\label{thm_at_alg_k_stable}
An A$\T$-algebra $A$ is $K$-stable if and only if it is rationally $K$-stable.
\end{theorem}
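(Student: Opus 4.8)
The forward implication is immediate and was already noted in the text: since $F_m(\cdot) = G_m(\cdot)\otimes \Q$, applying $-\otimes\Q$ to the isomorphisms $G_k(\iota^A_j)$ shows that each $F_m(\iota^A_j)$ is an isomorphism. The content of the theorem is therefore the converse, so suppose $A$ is rationally $K$-stable. Fix $k\geq 0$ and $j\geq 2$; the goal is to show that $G_k(\iota^A_j)\colon G_k(M_{j-1}(A))\to G_k(M_j(A))$ is an isomorphism, after which $K$-stability follows since $k$ and $j$ are arbitrary. The guiding idea is that once the matrix sizes of the building blocks are large relative to $k$, the inclusion $\iota_j$ lands in the stable range of the unitary groups and hence induces an isomorphism on $G_k$; rational $K$-stability is precisely the hypothesis that lets us arrange such large matrix sizes.

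To set this up, note first that by \cref{rem_generic_hom_limit} I may assume $A = \lim(A_p,\varphi_p)$ with each $\varphi_p$ diagonal. Choose an integer $m$ with $2m-1\geq k+1$, so that the standard inclusion $\u_s\hookrightarrow \u_{s'}$ (for $s\leq s'$) induces isomorphisms on both $\pi_k$ and $\pi_{k+1}$ whenever $s\geq m$; this is the classical stability range for the homotopy of unitary groups, the integral counterpart of the rational bound $1\leq m\leq 2n-1$ appearing in \cref{prop_iso_fm}. Invoking \cref{lem_rat_k_stable_min_dim} with this $m$, I rewrite $A = \lim_p(A_{m,p},\varphi^m_p)$ with diagonal connecting maps and $\min\dim(A_{m,p})\geq m$ for every $p$.

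Next I analyze $\iota_j$ at each finite stage. Writing $A_{m,p} = \bigoplus_i C(\T)\otimes M_{n_i}$ with every $n_i\geq m$, the map $\iota^{A_{m,p}}_j$ is the direct sum over $i$ of the standard inclusions $C(\T)\otimes M_{(j-1)n_i}\hookrightarrow C(\T)\otimes M_{jn_i}$. Using additivity of the homology theory $G_k$ together with the naturality of the split exact sequence $0\to SM_n\to C(\T)\otimes M_n\to M_n\to 0$ (which, exactly as in \cref{prop_iso_fm}, identifies $G_k(C(\T)\otimes M_n)\cong \pi_k(\u_n)\oplus\pi_{k+1}(\u_n)$), the map $G_k(\iota^{A_{m,p}}_j)$ is identified summand-by-summand with the maps induced by $\u_{(j-1)n_i}\hookrightarrow \u_{jn_i}$ on $\pi_k$ and on $\pi_{k+1}$. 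Since $j\geq 2$ forces $(j-1)n_i\geq n_i\geq m$, each of these maps lies in the stable range and is an isomorphism by the choice of $m$; hence $G_k(\iota^{A_{m,p}}_j)$ is an isomorphism for every $p$.

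Finally, naturality of $\iota_j$ with respect to the connecting maps $\varphi^m_p$ produces a ladder of commuting squares, and passing to inductive limits using the continuity of $G_k$ (\cref{prop: continuous_homology}), together with $M_{j-1}(A) = \lim_p M_{j-1}(A_{m,p})$ and $M_j(A) = \lim_p M_j(A_{m,p})$, shows that $G_k(\iota^A_j) = \lim_p G_k(\iota^{A_{m,p}}_j)$ is an isomorphism. The main obstacle is the finite-stage computation of the previous paragraph: one must check that, after identifying $G_k$ of a circle algebra with the two unitary-group homotopy groups, the inclusion $\iota_j$ really does correspond summand-by-summand to the stabilization maps, and that the single choice of $m$ (depending only on $k$) simultaneously pushes both the degree-$k$ and the degree-$(k+1)$ pieces into the stable range. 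Everything else---the reduction to diagonal maps, the arrangement of large $\min\dim$, and the passage to the limit---is supplied respectively by \cref{rem_generic_hom_limit}, \cref{lem_rat_k_stable_min_dim}, and the continuity of $G_k$.
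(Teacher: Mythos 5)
Your proposal is correct and follows essentially the same route as the paper's proof: reduce to diagonal connecting maps via \cref{rem_generic_hom_limit}, use \cref{lem_rat_k_stable_min_dim} to push $\min\dim$ past the stable range for $\pi_k$ and $\pi_{k+1}$ of unitary groups, analyze each finite stage through the split exact sequence $0\to C_{\ast}(\T,B_p)\to A_p\to B_p\to 0$ together with classical unitary-group stability, and pass to the limit by continuity of $G_k$. The only (harmless) differences are cosmetic: you handle general $j\geq 2$ directly at the finite stage, where the paper first reduces to the case $j=2$, and you exploit the natural direct-sum splitting of $G_k(C(\T)\otimes M_n)$ where the paper instead runs the Five Lemma on the induced long exact sequences.
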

\begin{proof}
It suffices to prove that rational $K$-stability implies $K$-stability. So suppose $A$ is a rationally $K$-stable A$\T$-algebra. By \cref{rem_generic_hom_limit}, we may assume that $A$ is described as an inductive limit $A = \lim (A_p, \varphi_p)$, where each connecting map $\varphi_p : A_p\to A_{p+1}$ is a diagonal $\ast$-homomorphism. \\

Now fix $k \in \N$, and we wish to prove that $G_k(\iota^A_j) : G_k(M_{j-1}(A)) \to G_k(M_j(A))$ is an isomorphism for each $j \geq 2$. Since $A$ is rationally $K$-stable, we may assume by \cref{lem_rat_k_stable_min_dim} that
\[
\min\dim(A_p) \geq \left\lceil \frac{k}{2}\right\rceil + 1.
\]
for each $p \in \N$. Now, $M_{j-1}(A)$ is an inductive limit of the algebras $\{M_{j-1}(A_p)\}$ and
\[
\min\dim(M_{j-1}(A_p)) \geq \left\lceil \frac{k}{2}\right\rceil + 1.
\]
for each $p\in \N$. Therefore, replacing $M_{j-1}(A)$ by $A$, it suffices to prove that the inclusion map $\iota_2^A : A\to M_2(A)$ induces an isomorphism $G_k(\iota_2^A) : G_k(A)\to G_k(M_2(A))$. \\

Now write $A_p = C(\T)\otimes B_p$ where $B_p = M_{\ell_1^p}(\C)\oplus M_{\ell_2^p}(\C) \oplus \ldots \oplus M_{\ell_{k_p}^p}(\C)$. By construction,
\[
k+1 < 2\ell_i^p
\]
for all $p$ and all $1\leq i\leq k_p$. Hence, the inclusion maps $\iota_2^{B_p} : B_p\to M_2(B_p)$ induce isomorphisms $G_k(B_p) \to G_k(M_2(B_p))$ for all $p \in \N$ by \cite[Chapter 2, Corollary 3.17]{mimura}. Also, by the natural isomorphisms $G_k(C_{\ast}(\T, B_p)) \cong G_{k+1}(B_p)$ and $G_k(M_2(C_{\ast}(\T, B_p))) \cong G_k(C_{\ast}(\T, M_2(B_p))) \cong G_{k+1}(M_2(B_p))$, the map
\[
G_k(\iota_2^{C_{\ast}(\T,B_p)}) : G_k(C_{\ast}(\T, B_p)) \to G_k(M_2(C_{\ast}(\T, B_p))
\]
is also an isomorphism. Furthermore, the following diagram commutes
\[
\xymatrix{
0\ar[r] & C_{\ast}(\T, B_p)\ar[r]\ar[d]_{\iota_2^{C_{\ast}(\T,B_p)}} & A_p\ar[r]^{\pi_p}\ar[d]^{\iota_2^{A_p}} & B_p\ar[r]\ar[d]^{\iota_2^{B_p}} & 0 \\
0\ar[r] & M_2(C_{\ast}(\T, B_p))\ar[r] & M_2(A_p)\ar[r]^{\pi_p^{(2)}} & M_2(B_p)\ar[r] & 0 \\
}
\]
Since $(G_k)$ is a homology theory, this induces a diagram of long exact sequences. By the Five Lemma, $G_k(\iota^{A_p}_2) : G_k(A_p) \to G_k(M_2(A_p))$ is an isomorphism for each $p \in \N$. Once again, the following diagram commutes
\[
\xymatrix{
0\ar[r] & C_{\ast}(\T, B_p)\ar[r]\ar[d]_{\widetilde{\varphi_p}} & A_p\ar[r]^{\pi_p}\ar[d]^{\varphi_p} & B_p\ar[r]\ar[d]^{\psi_p} & 0 \\
0\ar[r] & C_{\ast}(\T, B_{p+1})\ar[r] & A_{p+1}\ar[r]^{\pi_{p+1}} & B_{p+1}\ar[r] & 0 \\
}
\]
Therefore, $G_k(\iota_2^A) : G_k(A) \to G_k(M_2(A))$ is an isomorphism, and $A$ is $K$-stable.
\end{proof}

Our proof of the next part of \cref{mainthm_k_stable} is now a refinement of these arguments given above. We now recall an important definition (see, for instance, \cite[Definition 3.1.1]{rordam_stormer}).

\begin{defn}\label{defn_slow_dim_growth}
Let $(A_p,\varphi_p)$ be a sequence of circle algebras, then the sequence is said to have \emph{slow dimension growth} if $\lim_{p\to\infty} \min\dim(A_p) = +\infty$. An A$\T$-algebra $A$ is said to have slow dimension growth if it is the inductive limit of some sequence that has slow dimension growth.
\end{defn}

\begin{theorem}\label{thm_k_stable_sdg}
An A$\T$-algebra is $K$-stable if and only if it has slow dimension growth.
\end{theorem}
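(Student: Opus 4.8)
The plan is to prove both implications of the equivalence, leveraging the already-established equivalence between $K$-stability and rational $K$-stability (\cref{thm_at_alg_k_stable}), so that it suffices to connect slow dimension growth with rational $K$-stability. For the forward direction, suppose $A$ is $K$-stable; by \cref{thm_at_alg_k_stable} it is rationally $K$-stable, and I would invoke \cref{lem_rat_k_stable_min_dim} directly. That lemma says precisely that for \emph{each} $m\in\N$ there is an inductive sequence $(A_{m,p},\varphi_p^m)$ of circle algebras with diagonal connecting maps, with $A=\lim_p(A_{m,p},\varphi_p^m)$ and $\min\dim(A_{m,p})\geq m$ for all $p$. The remaining task is to upgrade this family of presentations (one for each $m$) into a single presentation exhibiting slow dimension growth. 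The natural device is a diagonal argument: I would extract from the $m$-th sequence a suitably late term $A_{m,p_m}$ (or a finite segment of it), string these together across increasing $m$, and check that the composite connecting maps remain diagonal $\ast$-homomorphisms and that the resulting telescope still has inductive limit $A$. Since $\min\dim(A_{m,p})\geq m$, along this diagonal subsequence $\min\dim$ tends to $+\infty$, which is exactly slow dimension growth.

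For the reverse direction, suppose $A$ has slow dimension growth, so $A=\lim(A_p,\varphi_p)$ with $\lim_p\min\dim(A_p)=+\infty$. I would show $A$ is rationally $K$-stable and then apply \cref{thm_at_alg_k_stable} to conclude $K$-stability. By \cref{rem_generic_hom_limit} I may assume the connecting maps $\varphi_p$ are diagonal (this does not disturb the slow-dimension-growth hypothesis, since passing to diagonal maps keeps the same sequence of algebras $A_p$). To verify rational $K$-stability, I must show $F_m(\iota_j^A)$ is an isomorphism for all $m\geq1$ and $j\geq2$. Fix $m$ and $j$. Because $\min\dim(A_p)\to+\infty$, for all sufficiently large $p$ every simple summand $C(\T)\otimes M_{\ell_i^p}$ of $A_p$ has $2\ell_i^p-1\geq m$, so the relevant nonstable rational homotopy groups have stabilized in the sense of \cref{prop_iso_fm}. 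Concretely, I would reuse the mechanism of the proof of \cref{thm_at_alg_k_stable}: writing $A_p=C(\T)\otimes B_p$ and using the split extension $0\to C_*(\T,B_p)\to A_p\xrightarrow{\pi_p} B_p\to0$ together with the finite-dimensional inclusion isomorphism (from \cite[Chapter 2, Corollary 3.17]{mimura}), the Five Lemma gives that $F_m(\iota_j^{A_p})$ is an isomorphism for all large $p$, and this passes to the limit by continuity of $F_m$.

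A subtlety worth isolating is that the two theorems \cref{thm_at_alg_k_stable} and this one together show all three conditions in \cref{mainthm_k_stable} are equivalent, so I should take care that the argument here does not secretly circularly re-prove \cref{thm_at_alg_k_stable}; instead I use it as a black box and only relate slow dimension growth to rational $K$-stability. The main obstacle I anticipate is the diagonalization in the forward direction: \cref{lem_rat_k_stable_min_dim} hands me a separate inductive presentation for each threshold $m$, but these presentations are a priori unrelated telescopes. I must show they can be interleaved into one genuine inductive system whose limit is still $A$ and whose connecting maps are still diagonal, with $\min\dim$ growing without bound. Controlling this requires either an Elliott-style intertwining/telescoping argument relating the $m$-th and $(m+1)$-th presentations, or a more careful inductive construction that builds the slow-dimension-growth presentation directly while re-running the orphan-elimination step of \cref{lem_rat_k_stable_min_dim}; making the compatibility of these connecting maps precise, so that the limit is verifiably $A$ (via an approximate intertwining or an appeal to \cite[Exercise 6.8]{rordam} as in the proof of \cref{lem_rat_k_stable_min_dim}), is the delicate point.
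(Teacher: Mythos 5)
Your proposal is correct and matches the paper's proof in both directions: slow dimension growth yields $K$-stability by re-running the mechanism of \cref{thm_at_alg_k_stable} on a diagonal presentation with large $\min\dim$ (the paper does this directly for $G_k$ rather than detouring through $F_m$, but the two are equivalent), and $K$-stability yields slow dimension growth via rational $K$-stability and the orphan-elimination of \cref{lem_rat_k_stable_min_dim}. The ``delicate point'' you isolate --- interleaving the $m$-indexed presentations into a single sequence whose limit is verifiably $A$ --- is precisely where the paper spends the bulk of its proof, and it resolves it by your second suggested route: re-running the construction of \cref{lem_rat_k_stable_min_dim} so as to retain compatible quotient maps $\pi^m_j : A_{m,p(m,j)} \to A_{m+1,j}$, forming the telescope $\psi_m := \pi^m_1\circ \varphi^m_{p(m,1),1}$ on the algebras $A_{m,1}$, and checking by hand (surjectivity and injectivity of the induced map, via \cite[Proposition 6.2.4]{rordam}) that its limit is $A$.
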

\begin{proof}
Let $A$ be an A$\T$-algebra. If $A$ has slow dimension growth, then for each $m \in \N$, there is a sequence $(A_{m,p}, \varphi^{m}_p)$ of circle algebras such that $A = \lim_{p\to \infty} (A_{m,p}, \varphi^{m}_p)$ and $\min\dim(A_{m,p}) \geq m$ for each $p \in \N$. By the results of \cref{sec_hom_circle}, we may assume that $\varphi^{m}_p$ are all diagonal, therefore proof of \cref{thm_at_alg_k_stable} applies and  shows that $A$ must be $K$-stable. \\

Conversely, suppose $A$ is $K$-stable, then $A$ is rationally $K$-stable. Write $A = \lim_{p \to \infty} (A_{1,p}, \varphi^{1}_p)$ where each $A_{1,p}$ is a circle algebra and each $\varphi^{1}_p$ is of type A . By \cref{rem_generic_hom_limit}, there is a sequence $(\widetilde{A}_{1,p}, \psi^{1}_p)$ such that $\widetilde{A}_{1,p} = A_{1,p}$ for all $p \in \N$, whose connecting maps are diagonal maps and whose limit $\widetilde{A} = \lim_{p\to \infty} (\widetilde{A}_{1,p},\psi^{1}_p)$ is also rationally K-stable.  \\

By the proof of \cref{lem_rat_k_stable_min_dim}, there is a subsequence $(p(1,j))_{j=1}^{\infty} \subset \N$ such that $\widetilde{A}_{1,p(1,j)}^{(1)}$ is an orphan for each $j \in \N$. By the construction in \cref{thm_homotopy_type_ab}, it is then clear that $A_{1,p(1,j)}^{(1)}$ must also be an orphan in the original sequence $(A_{1,p}, \varphi_p^1)$ for each $j \in \N$. Let $A_{2,j} = A_{1,p(1,j)}^{(>1)}$. Then, as in \cref{lem_rat_k_stable_min_dim}, we get an inductive sequence $(A_{2,p}, \varphi^2_p)$ such that $A = \lim_{p\to \infty} (A_{2,p}, \varphi^2_p)$ with $\min\dim(A_{2,p}) \geq 2$ for each $p \in \N$. \\

Thus proceeding, for each $m \in \N$, there is a sequence $(A_{m,p}, \varphi_p^m)$ of circle algebras such that $A = \lim_{p\to \infty}(A_{m,p}, \varphi_p^m)$ and $\min\dim(A_{m,p}) \geq m$ for each $p \in \N$. Moreover, for each $m \in \N$, there is a subsequence $(p(m,j))_{j=1}^{\infty} \subset \N$ and quotient maps $\pi^m_j : A_{m,p(m,j)} \to A_{m+1,j}$ such that the following diagram commutes
\[
\xymatrixcolsep{5pc}\xymatrix{
A_{m,p(m,j)}\ar[d]^{\pi^m_j}\ar[r]^{\varphi^{m}_{p(m,j+1),p(m,j)}} & A_{m,p(m,j+1)}\ar[d]^{\pi^m_{j+1}} \\
A_{m+1,j}\ar[r]_{\varphi^{m+1}_j} & A_{m+1,j+1}
}
\]
where $\varphi^{m}_{k,\ell} = \varphi_{k-1}\circ \varphi_{k-2}\circ \ldots \circ \varphi_{\ell} : A_{m,\ell} \to A_{m,k}$ whenever $k>\ell$. Furthermore, we may assume without loss of generality that $p(m,j+1)\geq j$ for each $m,j\in \N$. Now define $\psi_m : A_{m,1} \to A_{m+1,1}$ by
\[
\psi_m := \pi^m_1\circ \varphi^m_{p(m,1),1}.
\]
Then, $(A_{m,1},\psi_m)$ is an inductive sequence with $\min\dim(A_{m,1})\geq m$ for each $m \in \N$. We claim that $A = \lim_{m\to \infty} (A_{m,1},\psi_m)$. To see this, let $B := \lim_{m\to \infty} (A_{m,1},\psi_m)$, and let $\beta_m : A_{m,1}\to B$ be $\ast$-homomorphisms defining $B$ such that $\beta_{m+1}\circ \psi_m = \beta_m$. Also, let $\alpha^m_p : A_{m,p} \to A$ be the $\ast$-homomorphisms such that $\alpha^m_{p+1}\circ \varphi^m_p = \alpha^m_p$ for each $m,p \in \N$. Then, it follows that the following diagram commutes
\[
\xymatrix{
A_{m,1}\ar[rd]_{\alpha^m_1}\ar[rr]^{\psi_m} && A_{m+1,1}\ar[ld]^{\alpha^{m+1}_1} \\
& A
}
\]
By the universal property of the inductive limit, there is a $\ast$-homomorphism $\lambda : B\to A$ such that $\lambda\circ \beta_m = \alpha^m_1$ for each $m \in \N$. We claim that $\lambda$ is an isomorphism. \\

For surjectivity, it suffices to show that
\[
\bigcup_{k=1}^{\infty} \alpha^1_k(A_{1,k}) = \bigcup_{m=1}^{\infty} \alpha^m_1(A_{m,1})
\]
since $A = \lim_{k\to \infty} (A_{1,k}, \varphi^1_k)$. So fix $a \in \bigcup_{k=1}^{\infty} \alpha^1_k(A_{1,k})$, and choose $k_1\in \N$ such that $a \in \alpha^1_{k_1}(A_{1,k_1})$ and assume $k_1 > 1$. Since $p(1,k_1-1) \geq k_1$ by hypothesis, it follows that
\[
a \in \alpha^1_{p(1,k_1-1)}(A_{1,p(1,k_1-1)})
\]
However, for each $m,j\in \N$, the following diagram commutes
\[
\xymatrix{
A_{m,p(m,j)}\ar[rr]^{\pi^m_j}\ar[rd]_{\alpha^m_{p(m,j)}} && A_{m+1,j}\ar[ld]^{\alpha^{m+1}_j} \\
& A
}
\]
Therefore, $a\in \alpha^2_{k_2}(A_{2,k_2})$ where $k_2 = k_1-1 < k_1$. If $k_2 = 1$, then we may stop this process. Else, we may repeat this until we obtain $N \in \N$ such that $a \in \alpha^N_1(A_{N,1})$. Thus, $a \in \bigcup_{m=1}^{\infty} \alpha^m_1(A_{m,1})$ and we have proved that $\lambda$ is surjective. \\

For injectivity of $\lambda$, it suffices to prove that $\ker(\alpha^m_1) \subset \ker(\beta_m)$ for each $m \in \N$ (by \cite[Proposition 6.2.4]{rordam}). So fix $m \in \N$ and let $a \in \ker(\alpha^m_1)$. Then, $\lim_{n\to \infty}\|\varphi^m_{n,1}(a)\| = 0$. Therefore, if $\epsilon > 0$, then there exists $k_1 \in \N$ such that $\|\varphi^m_{k_1,1}(a)\| < \epsilon$. Assume first that $k_1 > 1$. Once again, since $p(m,k_1-1) \geq k_1$, it follows that
\[
\|\varphi^m_{p(m,k_1-1),1}(a)\| < \epsilon,
\]
and thus $\|\pi^m_{k_1-1}\circ \varphi^m_{p(m,k_1-1),1}(a)\| < \epsilon$. However, the following diagram commutes
\[
\xymatrixcolsep{5pc}\xymatrix{
A_{m,1}\ar[d]^{\psi_m}\ar[r]^{\varphi^m_{p(m,k_1-1),1}} & A_{m,p(m,k_1-1)}\ar[d]^{\pi^m_{k_1-1}} \\
A_{m+1,1}\ar[r]^{\varphi^{m+1}_{k_1-1}} & A_{m+1,k_1-1}
}
\]
Therefore, $\|\varphi^{m+1}_{k_2}\circ \psi_m(a)\| < \epsilon$ where $k_2 = k_1-1 < k_1$. Thus proceeding, there exists $N\in \N$ such that $\|\varphi^N_1\circ \psi_{N,m}(a)\| < \epsilon$ (where $\psi_{N,m} = \psi_{N-1}\circ \psi_{N-2}\circ \ldots \circ \psi_m$). This implies that $\|\varphi^N_{p(N,1),1}\circ \psi_{N,m}(a)\| < \epsilon$, which in turn proves that
\[
\|\psi_{N+1,m}(a)\| < \epsilon.
\]
This is true for any $\epsilon > 0$, so $\lim_{n\to \infty} \|\psi_{n,m}(a)\| = 0$. In other words, $a\in \ker(\beta_m)$. We conclude that $\ker(\alpha^m_1)\subset \ker(\beta_m)$, so $\lambda$ is an isomorphism. \\

Thus, $A = \lim_{m\to \infty} (A_{m,1},\psi_m)$. Since $\min\dim(A_{m,1}) \geq m$ for each $m\in \N$, we conclude that $A$ has slow dimension growth.
\end{proof}

\begin{ex}
\cref{mainthm_k_stable} has a number of interesting consequences, some of which we describe below.
\begin{enumerate}
\item If $A$ is a simple, infinite dimensional A$\T$-algebra, then $A$ has slow dimension growth (see \cite{dnnp}). Therefore, every such algebra is $K$-stable.
\item In particular, the simple noncommutative tori are all $K$-stable. However, this class of C*-algebras is known to be $K$-stable from the results of Rieffel \cite{rieffel2}.
\item Similarly, the Bunce-Deddens algebra from \cref{ex_bunce_deddens} is $K$-stable because it is simple \cite{bunce_deddens}. Once again, this may be deduced from another result due to Zhang which states that any non-elementary, simple C*-algebra with real rank zero and stable rank one is $K$-stable \cite{zhang}.
\item Finally, the algebra $G$ from \cref{ex_goodearl} is also $K$-stable because it has slow dimension growth. Notice that $G$ need not be simple in general.
\end{enumerate}
\end{ex}

\textbf{Acknowedgements:} The first named author is supported by UGC Junior Research Fellowship No. 1229, and the second named author was supported by the SERB Grant No. MTR/2020/000385.


\end{document}